\newtheorem{theo}{\bf Theorem}[section] \newtheorem{prop}[theo]{\bf
Proposition} \newtheorem{lemma}[theo]{\bf Lemma}
 \newtheorem{defi}[theo]{\bf
Definition} 
 \theoremstyle{remark}
\newtheorem{open}[theo]{\bf Open problem}
\DeclareMathOperator{\ess}{ess}
\DeclareMathOperator{\Po}{Po}
\begin{document}

\title[Pomax games -- a family of partizan games played
  on posets]{Pomax games -- a family of integer-valued partizan games played
  on posets}
\author{Erik J\"arleberg \and Jonas Sj{\"o}strand}
\address{Department of Mathematics, Royal Institute of Technology \\
  SE-100 44 Stockholm, Sweden}
\email{erikjar@kth.se}
\email{jonass@kth.se}
\date{Jan 2014}
\keywords{pomax game, poset,
  element-removal game, partizan game, combinatorial game theory,
PSPACE-completeness}
\subjclass[2010]{91A46, 68R05, 06A07}

\begin{abstract}



  We introduce the following class of partizan games, called
  \emph{pomax games}.  Given a partially ordered set whose elements
  are colored black or white, the players Black and White take turns
  removing any maximal element of their own color. If there is no
  such element, the player loses.

  We prove that pomax games are always integer-valued and for colored tree
  posets and chess-colored Young diagram posets
  we give a simple formula for the value of the game.
  However, for pomax games on general posets of height 3 we show
  that the problem of deciding the winner is PSPACE-complete and for posets
  of height 2 we prove NP-hardness.

  Pomax games are just a special case of a larger class of integer-valued games
  that we call \emph{element-removal games}, and
  we pose some open questions regarding
  element-removal games that are not pomax games.
\end{abstract}

\maketitle

\section{Introduction}
A \emph{pomax game} is played as follows.
Given a finite poset $P$ whose elements are colored black or white,
the players Black and White take turns
removing any maximal element of their own color. When
a player cannot make a legal move, he loses the game. As an example,
the pomax game
\begin{center}
\begin{tikzpicture}[every path/.style={>=latex},
  wNode/.style={circle, draw=black, inner sep=1pt, minimum size=5mm},
  bNode/.style={circle, draw=white!0!black, fill=black, text=white, outer sep=-1pt, inner sep=1pt, minimum size=5mm}]
  \small \matrix[row sep={9mm,between origins}, column
  sep={9mm,between origins}]{
    \node[bNode] (z) {$z$}; & \node[wNode] (w) {$w$};\\
    \node[wNode] (x) {$x$}; & \node[bNode] (y) {$y$}; \\
  };

  \draw[] (x) edge (z);
  \draw[] (y) edge (z);
  \draw[] (y) edge (w);

\end{tikzpicture}
\end{center}
is a zero game (that is, a second player win):
If Black starts he must remove $z$ and White can counter
by removing $x$, leaving Black with no legal move. If White starts he must
remove $w$, Black must remove $z$, White removes $x$ and finally Black removes
the last element $y$.

With the convention that White is the left (positive) player and Black
is the right (negative) player, one may ask for the game value of a
pomax game in general.  As we will show in
Section~\ref{sec:integervalued}, pomax games are always integer-valued
-- a very rare property among combinatorial games.

Since the birth of modern combinatorial game theory in the 1970s,
hundreds of two-player games with perfect information have been
invented (or discovered) and analyzed.  Most of them are impartial and
thus have nimber values by the Sprague-Grundy Theorem.  Among the
properly partizan games, some are always numbers --
Hackenbush restrained being the most prominent example \cite{conway} -- but, to
the best of our knowledge, essentially only one game studied in the
literature is always integer-valued, namely
Cutcake~\cite[pp.~24--27 and p.~51]{winningways1}.
This game comes in two flavors, Cutcake
and Maundy Cutcake, both of which have a very regular structure that
admits a complete analysis.

Despite being integer-valued, pomax games have a sufficiently rich
structure so that it is PSPACE-complete to decide the winner
of the game, as we will see in Section~\ref{sec:pspacecomplete}.
However, in some special cases the game is computationally tractable,
and in Sections~\ref{sec:balanced} and~\ref{sec:trees} we give
simple formulas for the value of the pomax game played on colored
tree posets and chess-colored Young diagram posets.

\smallskip
Many combinatorial games have been found to be PSPACE-complete,
including common board games like
Checkers, Hex and Reversi \cite{fraenkel78, reisch81, iwatakasai94}
but also more fundamental games like General Geography.
Recently, Grier showed that poset games are PSPACE-complete in general
~\cite{grier}.

A poset game is an impartial game played on a poset, where a legal
move consists of removing any element along with all greater elements.
Examples include the games Nim (where the poset is a disjoint sum
of chains) and Chomp (where the poset is a product of chains).
In a wide sense, pomax games are a partizan variant of poset games, but,
being partizan, they have a quite different role to play
in the abelian group of games.

For a expos\'e over computational complexity results for combinatorial games,
we refer to~\cite{demainehearn}.

\smallskip
Pomax games are just a special case of a larger
class of games that we call \emph{element-removal games}, and when
possible we will state
our results in this more general setting.

The starting position of an element-removal game is a
finite set $X$ whose elements are colored black or white, and in each move
the player (Black or White) removes an element of his own color.
However, not all elements are removable at any stage, but
the set of removable elements is a function of the set $A$ of elements
that are still present. Once an element
becomes removable it may never lose this status until it
is removed. Formally, the removability function
$\rho\colon 2^X\rightarrow2^X$ has the property that
\[
\rho(B)\cap A\subseteq\rho(A)\subseteq A
\]
for any $A\subseteq B\subseteq X$.

Pomax games are the special case where $\rho$ maps $A$ to the
maximal elements of the subposet induced by $A$.

\medskip
The paper is organized as follows.
In Section~\ref{sec:integervalued} we show that element-removal games, and
thus pomax games, are always integer-valued. In Section~\ref{sec:balanced}
we study \emph{balanced} games, a special kind of element-removal games
that are easy to analyze, and in Section~\ref{sec:trees} we give a formula
for the value of any pomax game on a colored tree poset.

After that, we switch our focus to the computational complexity of pomax
games: In
Section~\ref{sec:pspacecomplete} we show that pomax games are
PSPACE-complete even when restricted to height-three posets. As a warm-up,
we show NP-hardness in Section~\ref{sec:nphard}, a result of more than
pedagogical value since it holds already for posets of height two.

Finally, in Section~\ref{sec:future} we suggest some further research
and pose some open questions.

This work was performed at KTH in Stockholm where
the first author was writing his Master's thesis~\cite{jarleberg}, laying the
foundation for this paper.
The second author was supported
by a grant from the Swedish Research Council (621-2009-6090).

\section{Prerequisites}
Here, we will briefly recall those parts of combinatorial game theory
that will be used in the forthcoming sections. No proofs will be given,
but everything follows easily from the comprehensive discussion
in the book ``On Games and Numbers'' by Conway~\cite{conway}.

We will adopt standard notation and terminology for partizan games.
White will always be the left player and Black the right player, and
we will use curly-bracket notation $G=\{G^L\,|\,G^R\}$, where $G^L$ and
$G^R$ are typical left and right options of the game $G$.
The game $\{\,|\,\}$ is called \emph{the zero game} and is denoted by $0$,
and the game $\{\,0\,|\,\}$ is called $1$.

Recall that there is an equivalence relation on games, denoted by an ordinary
equality sign ``$=$'', such that $G=0$ if and only if the second
player wins $G$ (under optimal play). If $G=H$ we will simply say that
\emph{$G$ is equal to $H$}.

The (disjunctive) sum $G+H$ and the negation $-G$ is defined for games, and
the equivalence classes of games form an abelian group under these operations,
with the equivalence class of 0 as zero element.

There are also a partial order on (equivalence classes of) games,
denoted by ``$\ge$'', such that
$G\ge0$ if and only if White wins as a second player.
The order relation is compatible with the group structure.

A game is \emph{integer-valued} if it is equal to a game of
the form $1+1+\dotsb+1$ or its
negation, and the equivalence classes of integer-valued games form
a totally ordered abelian subgroup of the group of all games.

We will use the following sufficient condition for
integer-valueness, which is a simple consequence of the
Simplicity Theorem~\cite[Th.~11]{conway}.
\begin{lemma}\label{lm:integervalueness}
A game is integer-valued if its options are integer-valued and
the difference between any left and right options is at least 2.
In that case, the value of the game is the
integer closest to zero that is strictly larger that any left option and
strictly smaller than any right option.
\end{lemma}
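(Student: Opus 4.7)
The plan is to invoke the Simplicity Theorem directly. Recall that this theorem asserts that if the options $G^L$ and $G^R$ are all numbers, and if some number $x$ satisfies $G^L \not\ge x$ and $x \not\ge G^R$ for every left and right option, then $G$ equals the simplest (smallest-birthday) such number.

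Let $L = \max G^L$ and $R = \min G^R$, which exist since the option sets are finite and the options are integers, hence totally ordered. By assumption, $L, R \in \mathbb{Z}$ and $R - L \ge 2$, so the open interval $(L, R)$ contains at least one integer. Let $n$ denote the integer of smallest absolute value in $(L, R)$: this is $0$ if $L < 0 < R$, and otherwise $L+1$ or $R-1$ depending on the sign. By construction, $n$ is strictly larger than every $G^L$ and strictly smaller than every $G^R$, so the hypothesis of the Simplicity Theorem is met.

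It remains to show that $n$ is the \emph{simplest} number in $(L, R)$. Any integer $m$ with $|m| < |n|$ fails to lie in $(L, R)$ by the choice of $n$, so no simpler integer qualifies. For non-integer dyadic rationals, one uses the standard fact from the surreal-number construction that a rational of the form $a/2^k$ with $k \ge 1$ has birthday strictly greater than the birthday of every integer lying between its two adjacent dyadics of denominator $2^{k-1}$; in particular, any non-integer in $(L, R)$ is born strictly later than some integer in $(L, R)$, hence later than $n$. Therefore $n$ minimizes the birthday, and the Simplicity Theorem yields $G = n$. Since the options are integer-valued and $n$ is an integer, $G$ is integer-valued as well.

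The main obstacle, such as it is, lies not in the logic of the argument but in the birthday comparison between integers and proper dyadic rationals; this is essentially folklore but must be invoked carefully. Everything else is a direct application of the Simplicity Theorem as stated in Conway~\cite{conway}.
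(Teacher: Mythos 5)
Your argument is correct in substance and matches the paper's intent exactly: the paper offers no proof of this lemma at all, merely remarking that it is ``a simple consequence of the Simplicity Theorem,'' so you are supplying the details the authors omit. The reduction to the claim that the simplest number in an interval with integer endpoints at distance at least $2$ is the integer of least absolute value in that interval is the standard route, and the birthday comparison you need is sound: a non-integer dyadic $q$ in the interval is born strictly later than both $\lfloor q\rfloor$ and $\lceil q\rceil$, at least one of which lies in the \emph{open} interval precisely because $R-L\ge 2$. (Your intermediate phrasing about ``adjacent dyadics of denominator $2^{k-1}$'' is muddled, but the fact you actually invoke is the correct one.)

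There is one genuine gap: you set $L=\max G^L$ and $R=\min G^R$ and justify their existence by finiteness of the option sets, but the lemma must also cover the case where one (or both) of these sets is \emph{empty} --- the hypothesis that every left--right difference is at least $2$ is then vacuously satisfied, and the paper explicitly relies on this case (in the proof of Proposition~\ref{pr:balancedissimple} the lemma is applied to $\{w-b-1\,|\,\}$ and to $\{\,|\,w-b+1\}$). As written, your first step fails there. The repair is routine --- treat a missing $L$ as $-\infty$ and a missing $R$ as $+\infty$; the resulting half-line or full line still contains an integer, the Simplicity Theorem's hypothesis is vacuous on the empty side, and the same minimal-birthday argument identifies the integer closest to zero --- but the case needs to be stated, since it is exactly how the lemma gets used.
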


For posets we will write $x\lessdot y$ to denote that $x$
is \emph{covered} by $y$, that is, $x<y$ and there is nothing in between.

\section{Element-removal games are integer-valued}
\label{sec:integervalued}
Clearly, the class of element-removal games (and the class of pomax games)
is closed under
summation and negation, and negation just means
inversion of the coloring so that white elements become black and
vice versa -- it does not affect the removability function.

Our first result is a structure theorem telling us that element-removal
games are very simple objects from an algebraic point of view.
\begin{theo}
Any element-removal game (and thus any pomax game) is integer-valued.
\end{theo}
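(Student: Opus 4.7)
The plan is to proceed by strong induction on $|A|$, where $A\subseteq X$ denotes the set of currently present elements, and to invoke Lemma~\ref{lm:integervalueness} at every step. Write $G(A)$ for the element-removal game started from the position in which precisely the elements of $A$ remain. If $\rho(A)$ contains no element of either color (in particular if $A=\emptyset$) then $G(A)=\{\,|\,\}=0$, so the base case is immediate. Otherwise the left options of $G(A)$ are $G(A\setminus\{w\})$ for white $w\in\rho(A)$ and the right options are $G(A\setminus\{b\})$ for black $b\in\rho(A)$; these are integer-valued by induction. By Lemma~\ref{lm:integervalueness} it remains only to verify
\[
G(A\setminus\{b\})-G(A\setminus\{w\})\ge 2
\]
for every white $w\in\rho(A)$ and black $b\in\rho(A)$.

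The key step is to route both sides through the common smaller position $A\setminus\{w,b\}$. The ``once removable, always removable'' property $\rho(B)\cap A'\subseteq\rho(A')$ (applied with $B=A$ and $A'$ equal to $A\setminus\{b\}$ or $A\setminus\{w\}$) gives $w\in\rho(A\setminus\{b\})$ and $b\in\rho(A\setminus\{w\})$, so $G(A\setminus\{w,b\})$ appears simultaneously as a left option of $G(A\setminus\{b\})$ and as a right option of $G(A\setminus\{w\})$. All three of these games are integer-valued by the induction hypothesis, so the characterization clause of Lemma~\ref{lm:integervalueness} yields the strict integer inequalities
\[
G(A\setminus\{b\})>G(A\setminus\{w,b\})>G(A\setminus\{w\}),
\]
each necessarily of size at least $1$; adding them gives the desired gap of at least $2$.

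The only delicate ingredient is the monotonicity of $\rho$, which is precisely what makes the intermediate game $G(A\setminus\{w,b\})$ an option on both sides and thereby converts the required gap of $2$ into two applications of the strict inequality supplied by Lemma~\ref{lm:integervalueness}. I do not anticipate any serious obstacle beyond making sure that the induction is organized so that the hypothesis of the lemma is established for every subgame before it is used on the parent game.
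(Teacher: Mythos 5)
Your proof is correct, but it takes a genuinely different route from the paper's. The paper reduces everything to the single inequality $G-G^L\ge 1$ and proves it by exhibiting an explicit second-player winning strategy for White in the difference game $G-G^L-1$ (a three-case mirroring argument, where the monotonicity of $\rho$ guarantees that the mirrored moves remain legal). You instead establish the required gap $G^R-G^L\ge 2$ purely order-theoretically: monotonicity of $\rho$ lets you commute the removals of $w$ and $b$, so the common position $A\setminus\{w,b\}$ sits as a left option of the right option and as a right option of the left option, and two strict integer inequalities then sum to the gap of $2$. The one point you should tighten is the appeal to ``the characterization clause of Lemma~\ref{lm:integervalueness}'': that clause only applies to a game already known to satisfy the lemma's hypotheses, so your induction must carry along not just integer-valuedness but the statement that every $G(A')$ with $|A'|\le n$ satisfies those hypotheses (equivalently, that its value strictly exceeds each left option and is strictly below each right option); you flag exactly this and the bookkeeping does close, since the hypotheses for a set of size $n$ only refer to sets of size $n-1$. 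Alternatively you could bypass the bookkeeping entirely by citing the general fact that $G^L\not\ge G$ for any game (a consequence of $G\le G$), which for number-valued $G$ and $G^L$ immediately gives $G^L<G$. Comparing the two approaches: the paper's strategic argument is self-contained and yields the quantitative statement that each move is worth exactly one unit ($G-G^L\ge1$), which is conceptually informative; yours is shorter and more algebraic, and makes transparent that the whole theorem rests on a single combinatorial fact, namely that $w$ and $b$ can be removed in either order.
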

\begin{proof}
  It suffices to show that, for any element-removal game $G$ and any
  left (White) option $G^L$, we have $G-G^L\ge1$.  By symmetry, this
  will imply that $G-G^R\le-1$ and thus that $G^R-G^L\ge 2$, and by
  Lemma~\ref{lm:integervalueness}, and induction, $G$ will be integer-valued.
  So, if Black starts playing the game
  $G-G^L-1$ we must show that White has a winning strategy.

Let $X$ denote the set of elements of $G$ and let $x\in X$ be the
element that White removed from $G$ to obtain $G^L$.

\begin{description}
\item[Case 1]
Black removes an element $y$ from the $G$ component.
Since $y$ is removable from $X$, it is still removable from
$X\setminus\{x\}$, and thus White may
reply by removing $y$ in the $-G^L$ component. The resulting position
is $G^R-G^{RL}-1$, where $G^R$ is the game obtained from $G$ by removing
the black element $y$ and $G^{RL}$ is obtained from $G^R$ by
removing the white element $x$ (which is removable from $X\setminus\{y\}$
since it is removable from $X$). By (Conway) induction, this game
is nonnegative.

\item[Case 2]
Black removes an element $y$ in the $-G^L$ component.
Then White replies simply by removing $x$ from the $G$ component,
and the resulting position is
$G^L-G^{LL}-1$, where $G^{LL}$ is obtained from $G^L$ by removing $y$.
Again, this is nonnegative by induction.

\item[Case 3]
Black consumes his single move in the $-1$ component.
Then White replies by removing $x$ from the $G$ component, and the
resulting position is $G^L-G^L=0$.
\end{description}

\end{proof}

\section{Balanced games}
\label{sec:balanced}
As we will see in Section~\ref{sec:pspacecomplete}, it is very hard
to compute the value of a pomax game in general (unless
$\text{PSPACE}=\text{P}$).
In this section, however, we will
look at a class of particularly well-behaved element-removal games which we
give the attribute \emph{balanced}. It turns out that the value of
such a game is given simply by the number of
white minus the number of black elements.

\begin{defi}
An element-removal game is \emph{balanced} if it has the following
two properties.
\begin{itemize}
\item
All options are balanced.
\item
If all removable elements are of the same color, then at least half of the
total set of elements have that color.
\end{itemize}
For convenience, we say that a colored poset is balanced if its pomax
game is.
\end{defi}

Thus, a balanced colored poset cannot consist of millions of black elements
covered by a few maximal white elements -- there is always a maximal element
of the majority color.

\begin{prop}\label{pr:balancedissimple}
The value of a balanced game is the number of white elements minus
the number of black elements, and the outcome of the game
is independent of the players' strategies.
\end{prop}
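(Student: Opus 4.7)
The plan is to induct on the number of elements, using the Simplicity-style criterion of Lemma~\ref{lm:integervalueness} to compute the value and then reading off the strategy-independence of the outcome from the structure of the resulting play. The single nontrivial ingredient in both parts is the second clause of the balance definition.

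For the value, I would fix a balanced game $G$ with $w$ white and $b$ black elements. By the first balance clause every option is again balanced, so the inductive hypothesis gives each left option the value $w - 1 - b$ and each right option the value $w - b + 1$. Since the difference between any right and any left option is exactly $2$, Lemma~\ref{lm:integervalueness} forces $G$ to be the integer closest to $0$ that strictly exceeds every left option and is strictly less than every right option. When both option sets are non-empty this is plainly $w - b$. When one side is empty, the second balance clause kicks in: if, for instance, no black element is removable, then $w \ge b$, so $w - b \ge 0$ is indeed the integer of smallest absolute value exceeding $w - b - 1$; the other side is symmetric, and if no element at all is removable then the two vacuous hypotheses together force $w = b$ and $G = \{\,|\,\} = 0$. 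I expect this handling of the boundary cases to be the main obstacle, since it is essentially the whole point of requiring the balance condition.

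For the outcome claim, I would use that every position reached during a play of $G$ is itself balanced and hence, by what was just proved, has value equal to its own white-minus-black count. Taking the second balance clause contrapositively, White can run out of moves only from a position with $w \le b$, and Black only from a position with $w \ge b$. Now consider any play of $G$ starting from value $v_0 = w_0 - b_0$. Because White's moves decrement the running value by one and Black's increment it by one, if $v_0 > 0$ then the value at the start of any of White's turns is either $v_0$ or $v_0 + 1$ and is therefore still positive, guaranteeing that White always has a legal move; the ground set strictly shrinks, so the game terminates, and it can only terminate on one of Black's turns, producing a White win no matter what either player chooses. The $v_0 < 0$ case is symmetric, and the $v_0 = 0$ case reduces to these after the forced first move. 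Hence the winner depends only on $v_0$ and on who moves first.
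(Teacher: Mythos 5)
Your proof is correct and takes essentially the same route as the paper: induction on options via Lemma~\ref{lm:integervalueness}, with the second balance clause supplying the inequality needed when one option set is empty. Your explicit play-tracking argument for strategy-independence merely fills in a step the paper dispatches in one sentence (all options from any position have the same value, so no choice matters), so it is the same underlying idea.
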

\begin{proof}
Let $G$ be a balanced game with $w$ white elements and $b$ black elements.
Since all options of $G$ are also balanced, by induction,
the value of any left option is
$G^L=w-b-1$ and the value of any right option is
$G^R=w-b+1$.

If $G$ has at least one left option and at least one right
option it follows that $G=w-b$ by Lemma~\ref{lm:integervalueness}.

Suppose $G$ has no right option. Then, since $G$ is balanced, we have
$w\ge b$ and thus $G=\{G^L\,|\,\}=\{w-b-1\,|\,\}=w-b$ by
Lemma~\ref{lm:integervalueness}. The case where
$G$ has no left option is completely analogous.

Since the value of the game is a function of the number of white
and black elements, the outcome does not depend on the
strategies.
\end{proof}

\subsection{Balanced pomax games}
If we are given a poset and want to color it in a way that
will make it balanced, it
seems natural to try a \emph{chess coloring}, namely
a coloring where no element covers an element of the same color.
In this section, we show
that this idea is successful at least for two kinds of posets:
tree posets and Young diagram posets.

In a (non-empty) \emph{tree poset} each element except one -- the root --
covers exactly one element. (For technical reasons, the empty
poset is also considered to be a tree.)
\begin{prop}\label{pr:chesstrees}
The pomax game on a chess-colored tree poset is balanced.
\end{prop}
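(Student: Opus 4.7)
The plan is to induct on the number of elements of the tree poset. The empty tree poset has no options and no removable elements, so it is trivially balanced. For the inductive step, consider a nonempty chess-colored tree poset $T$ and verify the two defining properties of balancedness separately.

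For the first property, every option is obtained by removing a maximal element $x$ of $T$. Since $x$ is a leaf of the Hasse diagram, no other element covers it, so removing $x$ does not alter any cover relation among the remaining elements; in particular, each non-root element of $T \setminus \{x\}$ still covers exactly one element, so $T \setminus \{x\}$ is again a tree poset, and the chess coloring trivially persists. By the inductive hypothesis, every option of the resulting pomax game is balanced.

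The second property is the real content of the proof: if every maximal element of $T$ has some common color $C$, then the number of $C$-colored elements must be at least the number of elements of the opposite color $\bar C$. I would establish this by exhibiting an injection from the $\bar C$-colored elements into the $C$-colored ones. Since every maximal element of $T$ is $C$-colored, no $\bar C$-colored element is maximal, so each of them has at least one child in the Hasse diagram, and by the chess-coloring property that child is $C$-colored. Crucially, the tree-poset axiom forces each non-root element to have a unique parent, so distinct parents have disjoint sets of children; therefore, mapping each $\bar C$-element to an arbitrarily chosen $C$-colored child produces a well-defined injection, giving the required inequality.

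I do not foresee a serious obstacle, since the argument reduces to combinatorial bookkeeping. The only subtle point worth attention is that the root of $T$ might itself be $\bar C$-colored, but in that case it is still not maximal (all maximal elements being $C$-colored), so it has at least one child and the injection absorbs it without incident.
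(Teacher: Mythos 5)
Your proof is correct and follows essentially the same route as the paper: the key step in both is the injection sending each element of the minority color to one of the (opposite-colored) elements covering it, with injectivity guaranteed by the tree-poset axiom that each non-root element covers exactly one element. The only difference is that you spell out the routine induction and the closure of chess-colored tree posets under removal of a maximal element, which the paper leaves implicit.
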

\begin{proof}
Suppose all maximal elements of the poset are white. Then,
each black element can be paired with one of the white elements
covering it.
\end{proof}

A \emph{Young diagram} (in English notation) is a finite collection of
cells, arranged in left-justified rows, with the row lengths weakly
decreasing.  It can be interpreted as a poset by the rule that a cell
covers the cell immediately to its left and the cell immediately above
it (if those cells exist).\footnote{Young diagram posets can be
  equivalently characterized as being the order ideals of a product of two
  finite chains.}  The maximal cells are called \emph{outer corners}.
Figure~\ref{fig:Youngdiagram} shows an example.
\begin{figure}
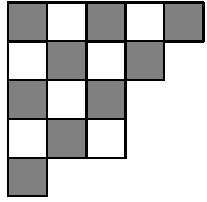

\begin{ytableau}
*(gray) & & *(gray) & & *(gray) \\
& *(gray) & & *(gray) \\
*(gray) & & *(gray) \\
& *(gray) & \\
*(gray)
\end{ytableau}
\caption{A chess-colored Young diagram with four outer
  corners.}\label{fig:Youngdiagram}
\end{figure}

\begin{prop}\label{pr:chessdiagrams}
The pomax game on a chess-colored Young diagram is balanced.
\end{prop}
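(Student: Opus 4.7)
The plan is to proceed by induction on the number of cells in the Young diagram. Removing any outer corner always yields a smaller chess-colored Young diagram, so the inductive hypothesis guarantees that every option of the game is already balanced. What remains is to verify the second requirement of balancedness: when all outer corners have the same color, at least half of the cells share that color. By exchanging the two colors — an operation that preserves both the chess coloring and the hypothesis — I may assume that all outer corners are white and prove $|W|\ge|B|$, where $W$ and $B$ denote the sets of white and black cells.

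To establish this inequality I will exhibit an injection from $B$ into $W$ built out of covering pairs. First, partition the rows of the diagram into maximal blocks of consecutive rows of equal length. Within each block I pair the cells of each row horizontally, $(i,2k-1)\lessdot(i,2k)$; if the common row length $s$ is even this exhausts the row, while if $s$ is odd the cell $(i,s)$ in each row of the block remains. For each block with odd row length $s$ I then pair up the remaining column-$s$ cells vertically from the top down, $(r_0,s)\lessdot(r_0+1,s)$, $(r_0+2,s)\lessdot(r_0+3,s)$, and so on, where $r_0,\dots,r_1$ are the rows of the block. Every such pair consists of two cells related by $\lessdot$, hence of opposite color under the chess coloring, so each pair matches exactly one black cell with one white cell.

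The only cells left unpaired by this procedure are those of the form $(r_1,s)$ arising from a block whose length $s$ and number of rows are both odd. The crucial observation — and the main point of the argument — is that any such leftover cell is an outer corner of the whole diagram: its row ends at column $s$, so nothing lies to its right, and maximality of the block forces either $r_1$ to be the last row of the diagram or the row below to have length strictly less than $s$, so nothing lies below it either. By hypothesis such a cell is white, and consequently every black cell is paired with a distinct white cell, giving $|W|\ge|B|$ as required. The delicate step to get right is this last one: without the maximality of blocks, the leftover cell need not be an outer corner, so the white-corner hypothesis could not be invoked.
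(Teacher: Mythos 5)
Your proof is correct, and it rests on the same underlying idea as the paper's: assuming all outer corners are white, exhibit a pairing that matches every black cell with a distinct white cell, the key point being that any cell left unpaired is forced to be an outer corner and hence white. The difference is in the decomposition. The paper works at the level of whole rows: a row ending in a black cell cannot end in an outer corner, so the row immediately below it has the same length, and those two rows together are exactly color-balanced; every other row ends in a white cell and so has a white majority on its own. You instead group the rows into maximal blocks of equal length and build an explicit cell-level matching (horizontal covering pairs, then vertical covering pairs in the last column of an odd-length block), identifying the unique possible leftover cell of a block as an outer corner via maximality of the block and the weak decrease of row lengths. Your version is longer but more explicit about why each pair is bicolored and why the leftover cell is maximal, and it also spells out the induction establishing the first condition of balancedness (every option is again a chess-colored Young diagram), which the paper leaves implicit. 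Neither argument has a gap; the paper's is simply a more economical packaging of the same count.
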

\begin{proof}
Suppose all outer corners of the Young diagram are white. Then,
each row that ends with a black cell has a row of the same length
immediately below it, and together these two rows have equally many
white as black cells. A row that ends with a white cell has
at least as many white cells as black ones.
\end{proof}

Figure~\ref{fig:counterexamples} shows that
Propositions~\ref{pr:chesstrees} and~\ref{pr:chessdiagrams}
cannot be extended to three-dimensional
(plane partition) diagrams nor to two-dimensional distributive lattices.
However, they can be extended to a larger class of colorings, namely
those avoiding blocking triples.
\begin{figure}
\begin{tikzpicture}[every path/.style={>=latex},
  wNode/.style={circle, draw=black, inner sep=1pt, minimum size=3mm},
  bNode/.style={circle, draw=white!0!black, fill=black, text=white, outer sep=-1pt, inner sep=1pt, minimum size=3mm}]
  \small \matrix[row sep={4mm,between origins}, column
  sep={12mm,between origins}]{
    \node{}; & \node{}; & \node{}; & \node[wNode] (p220) {}; & \node{}; & \node{}; & \node{}; \\
    \node[wNode] (p301) {}; & & & & & & \node[wNode] (p031) {}; \\
    & & & \node[wNode] (p112) {}; & & & \\
    \node[bNode] (p300) {}; & & \node[bNode] (p210) {}; & & \node[bNode] (p120) {}; & & \node[bNode] (p030) {}; \\
    & \node[bNode] (p201) {}; & & \node[bNode] (p111) {}; & & \node[bNode] (p021) {}; & \\
    & & \node[bNode] (p102) {}; & & \node[bNode] (p012) {}; & & \\
    & \node[wNode] (p200) {}; & & \node[wNode] (p110) {}; & & \node[wNode] (p020) {}; & \\
    & & \node[wNode] (p101) {}; & & \node[wNode] (p011) {}; & & \\
    & & & \node[wNode] (p002) {}; & & & \\
    & & \node[bNode] (p100) {}; & & \node[bNode] (p010) {}; & & \\
    & & & \node[bNode] (p001) {}; & & & \\
    & & & & & & \\
    & & & \node[wNode] (p000) {}; & & & \\
  };

  \draw[] (p000) edge (p100);
  \draw[] (p000) edge (p010);
  \draw[] (p000) edge (p001);

  \draw[] (p100) edge (p200);
  \draw[] (p100) edge (p110);
  \draw[] (p100) edge (p101);

  \draw[] (p010) edge (p110);
  \draw[] (p010) edge (p020);
  \draw[] (p010) edge (p011);

  \draw[] (p001) edge (p101);
  \draw[] (p001) edge (p011);
  \draw[] (p001) edge (p002);

  \draw[] (p200) edge (p300);
  \draw[] (p200) edge (p210);
  \draw[] (p200) edge (p201);

  \draw[] (p110) edge (p210);
  \draw[] (p110) edge (p120);
  \draw[] (p110) edge (p111);

  \draw[] (p101) edge (p201);
  \draw[] (p101) edge (p111);
  \draw[] (p101) edge (p102);

  \draw[] (p020) edge (p120);
  \draw[] (p020) edge (p030);
  \draw[] (p020) edge (p021);

  \draw[] (p011) edge (p111);
  \draw[] (p011) edge (p021);
  \draw[] (p011) edge (p012);

  \draw[] (p002) edge (p102);
  \draw[] (p002) edge (p012);

  \draw[] (p300) edge (p301);

  \draw[] (p210) edge (p220);

  \draw[] (p201) edge (p301);

  \draw[] (p120) edge (p220);

  \draw[] (p111) edge (p112);

  \draw[] (p102) edge (p112);

  \draw[] (p030) edge (p031);

  \draw[] (p021) edge (p031);

  \draw[] (p012) edge (p112);
\end{tikzpicture}
\ \ \ \ \ \ \ 
\begin{tikzpicture}[every path/.style={>=latex},
  wNode/.style={circle, draw=black, inner sep=1pt, minimum size=3mm},
  bNode/.style={circle, draw=white!0!black, fill=black, text=white, outer sep=-1pt, inner sep=1pt, minimum size=3mm}]
  \small \matrix[row sep={9mm,between origins}, column
  sep={9mm,between origins}]{
    \node{}; & \node[wNode] (w3) {}; & \node{}; \\
    \node[bNode] (b21) {}; & & \node[bNode] (b22) {}; \\
    & \node[wNode] (w2) {}; & \\
    \node[bNode] (b11) {}; & & \node[bNode] (b12) {}; \\
    & \node[wNode] (w1) {}; & \\
  };

  \draw[] (w1) edge (b11);
  \draw[] (w1) edge (b12);
  
  \draw[] (w2) edge (b11);
  \draw[] (w2) edge (b12);
  \draw[] (w2) edge (b21);
  \draw[] (w2) edge (b22);

  \draw[] (w3) edge (b21);
  \draw[] (w3) edge (b22);

\end{tikzpicture}

\caption{Chess-colored posets with more black
than white elements but with all maximal elements white.
Left: a plane partition poset, that is, an order ideal of a product
of three chains. Right: a
two-dimensional distributive lattice.}\label{fig:counterexamples}
\end{figure}
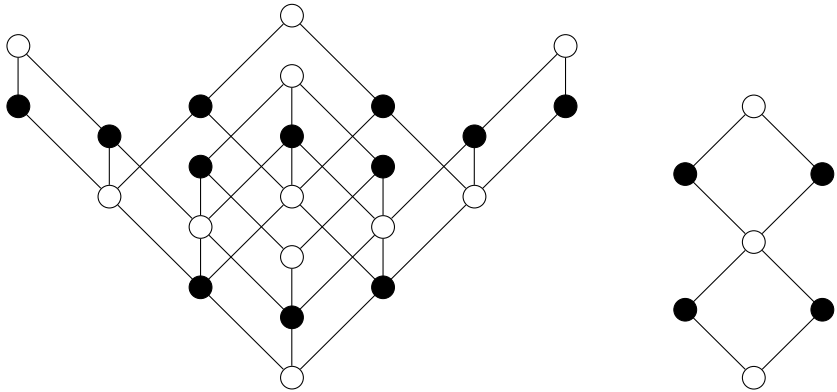

\begin{defi}
A \emph{blocking triple} in a colored poset is a triple of elements
$x\lessdot y\lessdot z$ such that $x$ and $y$ are of the same color and $z$
is of a different color.
\end{defi}

\begin{lemma}\label{lm:noblackneighbors}
Let $P$ be a colored poset without blocking triples and suppose that
all maximal elements are white. Then, no black element is covered
by a black element.
\end{lemma}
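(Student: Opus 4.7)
The plan is to argue by contradiction. Suppose some black element $y$ is covered by a black element $z_1$, i.e.\ $y \lessdot z_1$ with both black. Since all maximal elements of $P$ are white, $z_1$ is not maximal, so we may extend upward: choose a saturated chain
\[
y \lessdot z_1 \lessdot z_2 \lessdot \dotsb \lessdot z_k
\]
where $z_k$ is a maximal element of $P$. By hypothesis, $z_k$ is white.

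The key step is then to show, by induction on $i = 1, 2, \dotsc, k$, that every $z_i$ is black. The base case $i=1$ holds by assumption. For the inductive step, assume $z_{i-1}$ and $z_i$ are both black (where $z_0 := y$); then the triple $z_{i-1} \lessdot z_i \lessdot z_{i+1}$ has its two lower elements of the same color, so the no-blocking-triples hypothesis forces $z_{i+1}$ to have the same color as $z_i$, namely black. Applying this up the chain, we conclude $z_k$ is black, contradicting the fact that $z_k$ is maximal and hence white.

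I don't expect any real obstacle here: the argument is a direct unwinding of the definition once one observes that a maximal chain extending the alleged $y \lessdot z_1$ must reach a white maximum, and that blocking triples are exactly what would be needed to ``flip'' the color along two consecutive covers. The only thing to be careful about is choosing a \emph{saturated} (cover-by-cover) chain to the top, so that the definition of blocking triple, which is phrased in terms of $\lessdot$, can be applied at each step.
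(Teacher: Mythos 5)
Your proof is correct. It rests on the same key observation as the paper's proof -- that the absence of blocking triples forces blackness to propagate upward across covering relations -- but you organize the argument differently. You follow a saturated chain from the offending pair $y\lessdot z_1$ all the way up to a maximal element and run an explicit two-step induction along it, whereas the paper uses an extremal argument: it takes an element $x$ maximal among the black elements covered by a black element, notes that the black element $y$ covering it cannot itself be covered by a black element (by maximality of $x$) nor be maximal in $P$ (since it is black), and so obtains a blocking triple $x\lessdot y\lessdot z$ in a single step. Your version is a bit longer but arguably more transparent; the paper's is more economical. Your caution about using a \emph{saturated} chain is exactly the right point to flag, since the blocking-triple condition is phrased in terms of $\lessdot$, and your observation that $z_1$ cannot itself be maximal (so the chain genuinely has somewhere to go) closes the only potential edge case.
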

\begin{proof}
Let $B$ be the set of black elements that are covered by a black element.
Suppose $B$ is not empty, and let $x$ be an element that is maximal in $B$.
Then $x$ is covered by some black element $y$ not in $B$ which must be
covered by some element $z$ since no black element is maximal in $P$.
Since $y$ does not belong to $B$, the element $z$ must be white,
but this is impossible since $x\lessdot y\lessdot z$ form a blocking triple.
\end{proof}

\begin{prop}\label{pr:balancedtrees}
Any colored tree poset without blocking triples is balanced.
\end{prop}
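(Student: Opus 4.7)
The plan is to argue by induction on $|P|$, verifying both clauses of the balancedness definition for a colored tree poset $P$ with no blocking triples; the empty poset is trivially balanced, providing the base case.

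For the first clause, any option of the pomax game on $P$ is obtained by deleting a maximal element $m$, and since nothing covers $m$ every cover relation among the remaining elements survives unchanged. Thus $P\setminus\{m\}$ is again a tree poset, and any blocking triple in $P\setminus\{m\}$ would already be a blocking triple in $P$. So $P\setminus\{m\}$ satisfies the hypothesis on fewer elements and by induction is balanced.

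For the second clause, assume by symmetry that every maximal element of $P$ is white; I would then construct an injection $\phi$ from the black elements to the white elements. Every black element $x$ is non-maximal (by hypothesis), so some element $y$ covers $x$, and Lemma~\ref{lm:noblackneighbors} forces every such $y$ to be white. Pick one and set $\phi(x)=y$. Since $y>x$, the element $y$ is not the minimum of $P$, hence is not the root; the tree property then says $y$ covers exactly one element, namely $x$, so $\phi(x_1)=\phi(x_2)$ immediately forces $x_1=x_2$ and $\phi$ is injective.

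The single spot that needs care is a black root: the recipe ``pick something above $x$'' requires $x$ to be non-maximal, and a black root is non-maximal only when $|P|>1$. This is not actually a problem, because the singleton tree has its unique element as the maximal one, which must be white by hypothesis and so contributes no black elements to inject. Beyond this minor bookkeeping, the argument is driven by Lemma~\ref{lm:noblackneighbors} and by the asymmetry in the tree definition (each non-root element covers \emph{exactly one} element), which together make the injectivity of $\phi$ essentially free.
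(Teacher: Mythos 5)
Your proof is correct and follows essentially the same route as the paper: invoke Lemma~\ref{lm:noblackneighbors} to cover each black element by a white one and use the fact that a non-root element covers exactly one element to get injectivity of the pairing. You merely make explicit two points the paper leaves implicit, namely the inductive check that options remain blocking-triple-free tree posets and the injectivity of $\phi$.
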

\begin{proof}
  Suppose all maximal elements are of the same color, white say. Then, by
  Lemma~\ref{lm:noblackneighbors}, each black element is covered by
  some white element.  This pairing shows that there are at least as
  many white as black elements.




\end{proof}

\begin{prop}\label{pr:youngbalanced}
Any colored Young diagram without blocking triples is balanced.
\end{prop}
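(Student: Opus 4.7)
By symmetry it suffices to treat the case that all outer corners of the Young diagram are white, and to show that the number of white cells is at least the number of black cells. The plan is to adapt the row-by-row argument of Proposition~\ref{pr:chessdiagrams}, replacing the chess-coloring hypothesis with the weaker consequence of Lemma~\ref{lm:noblackneighbors}: since no black cell is covered by a black cell, no two cells that are horizontally or vertically adjacent in the diagram can both be black.

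The key step will be to partition the rows into groups of size one or two, processed from top to bottom. When I reach an unprocessed row $i$, I examine its last cell $(i,\ell_i)$. If that cell is white, row $i$ becomes a singleton group; each black cell $(i,j)$ in the row must then have $j<\ell_i$, so its right neighbour $(i,j+1)$ exists and, covering a black cell, is white, giving an injection from blacks to whites and hence $w_i\ge b_i$. If instead $(i,\ell_i)$ is black, it cannot be maximal, so $(i+1,\ell_i)$ must exist, and the weakly decreasing row-lengths force $\ell_{i+1}=\ell_i=:L$; I then pair rows $i$ and $i+1$ together. For such a pair, the vertical non-adjacency of blacks applied column by column says that at most one of $(i,j)$ and $(i+1,j)$ is black for each $j$, so $b_i+b_{i+1}\le L$ and hence $w_i+w_{i+1}\ge L\ge b_i+b_{i+1}$.

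Summing these inequalities over all groups then yields the required count. The only thing to verify is that this partitioning procedure is well defined, that is, that it never tries to pair the last row $n$ with a nonexistent row $n+1$; but that would require the final cell of row $n$ to be black, in which case it would be an outer corner, contradicting our standing hypothesis. I expect this well-definedness check to be the main---indeed essentially only---subtle point, with the two combinatorial inequalities above being immediate from the adjacency condition.
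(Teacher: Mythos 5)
Your proof is correct and follows essentially the same route as the paper's: reduce by symmetry to the case of all-white outer corners, use Lemma~\ref{lm:noblackneighbors} to forbid horizontally or vertically adjacent black cells, and handle rows ending in a black cell by pairing them with the (necessarily existing, equal-length) row below. Your explicit top-to-bottom partition and the column-by-column count within a paired group are just a slightly more carefully organized version of the paper's row-by-row bookkeeping, including the same final observation that the bottom row cannot end in a black cell.
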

\begin{proof}
Suppose all outer corners
are of the same color, white say. We want to show that at least
half of the cells are white.

In the light of Lemma~\ref{lm:noblackneighbors} it is easy to see that
any row in the Young diagram has at most one more black cell than white cells,
and this happens only if the row both starts and ends with a black cell.
Furthermore, a row both starting and ending with a white cell has an
excess of white cells.

Any row both starting and ending with a black cell must have a
row immediately below starting and ending with a white cell.





\end{proof}

\subsection{Other balanced element-removal games}
\label{sec:othergames}
The pomax games considered above have several cousins which are
element-removal games but not pomax games. Some of these variants
can be shown to be balanced by the same argument as we used for pomax games.

First, we consider a variant called
\emph{min-max-removal games}. It is an element-removal game played
on a poset, but we let not only the maximal elements but also the
minimal elements be removable.

Starting with a colored tree poset, playing the min-max-removal game
will soon result in a poset consisting of several disjoint trees,
so we ought to formulate our results for such \emph{forest posets}.
The blocking triples turn out to be the right tool also in this situation.
\begin{prop}
The min-max-removal game on any colored forest poset without
blocking triples is balanced.
\end{prop}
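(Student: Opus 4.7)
The plan is to mimic the proof of Proposition~\ref{pr:balancedtrees}, proceeding by Conway induction on the game and verifying both clauses of the definition of ``balanced''.

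First I would check that the class of colored forests without blocking triples is closed under taking options. An option corresponds to removing a minimal element (a root of some tree component) or a maximal element (a leaf of some tree component). Removing a leaf $m$ simply deletes $m$ from its component; removing a root $r$ of a tree $T$ replaces $T$ by the forest consisting of the subtrees of $T$ rooted at the children of $r$, and leaves the other components untouched. In either case the resulting poset is again a forest. Moreover, a removed root or leaf has no element lying both strictly below and strictly above it in the Hasse diagram of $F$, so no new covering relations appear in the induced subposet on the remaining elements. Consequently no new blocking triples can be created, and the hypothesis is preserved. By Conway induction, all options of the game are balanced.

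Next I would verify the color-majority clause. Suppose all removable elements of the forest $F$ are of the same color, say white. In the min-max-removal game the removable elements are exactly the roots and the leaves, so the hypothesis says that every root and every leaf of $F$ is white. Let $T$ be any tree component of $F$. The maximal elements of $T$ are leaves of $F$, so they are all white; and $T$ inherits the no-blocking-triples property from $F$. Proposition~\ref{pr:balancedtrees} then tells us that $T$ is balanced, and applying its color-majority clause to the white color yields that at least half of the elements of $T$ are white. Summing this inequality over all tree components of $F$ shows that at least half of the elements of $F$ are white, as required.

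The main point to be careful about is the closure step, since in a general poset deleting elements can introduce new covering relations in the induced subposet and thereby accidentally create blocking triples. This difficulty does not arise in the forest setting precisely because we only ever remove roots and leaves of the tree components: the Hasse diagram of the remaining poset is just the Hasse diagram of $F$ with some leaves or roots snipped off, so its covering relations form a subset of those of $F$, and no blocking triple can appear where none was before.
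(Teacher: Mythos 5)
Your proof is correct and follows essentially the same route as the paper, which simply declares the argument ``identical to the proof of Proposition~\ref{pr:balancedtrees}'': the counting in the majority clause ultimately rests on the same pairing of black elements with white covers via Lemma~\ref{lm:noblackneighbors}, which you access componentwise through the already-proven tree proposition rather than rerunning it. Your explicit verification that removing roots and leaves preserves the forest structure and creates no new covering relations (hence no new blocking triples) is a point the paper leaves implicit, and it is handled correctly.
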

\begin{proof}
Identical to the proof of Proposition~\ref{pr:balancedtrees}.
\end{proof}

Starting with a Young diagram poset and playing the min-max-removal game
will soon result in a \emph{skew Young diagram poset}, that is, a
Young diagram with a smaller Young diagram deleted from its
upper-left corner.
\begin{prop}
  The min-max removal game on any colored skew Young diagram poset without
  blocking triples is balanced.
\end{prop}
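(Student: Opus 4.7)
The plan is to adapt the row-by-row counting argument of Proposition~\ref{pr:youngbalanced} to the skew setting, now exploiting that both outer-corner (maximal) and inner-corner (minimal) cells are removable and hence forced white under the balancedness hypothesis. Write row $i$ as occupying columns $[a_i,b_i]$, so that $a_1 \geq a_2 \geq \cdots$ and $b_1 \geq b_2 \geq \cdots$. Since all maximal cells are white and there are no blocking triples, Lemma~\ref{lm:noblackneighbors} still implies no black cell is covered by a black cell; in particular no two horizontally adjacent cells in a row are both black. Each row therefore contributes $\#W-\#B \geq -1$, with equality only for \emph{bad} rows of odd length alternating $BWB\cdots B$.

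The key step is to show each bad row is compensated by the row immediately above it. If row $i$ is bad, then the leftmost cell $(i,a_i)$ is black and thus not minimal, which forces $(i-1,a_i)$ to exist; together with $a_{i-1}\geq a_i$ this gives $a_{i-1}=a_i$. Analogously $b_{i+1}=b_i$, so $i$ is neither the first nor the last row. Row $i-1$ then spans $[a_i,b_{i-1}]$ with $b_{i-1}\geq b_i$; in its sub-segment $[a_i,b_i]$ the cells directly above the black cells of row $i$ are forced white by Lemma~\ref{lm:noblackneighbors}, giving $\#W-\#B \geq 1$ in that segment. If the right extension $[b_i+1,b_{i-1}]$ is non-empty, its rightmost cell has no cell to its right or below and is therefore an outer corner, hence white; a standard count in a segment ending with a white cell gives $\#W-\#B \geq 0$ there. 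Altogether row $i-1$ contributes at least $+1$.

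Finally, I pair each bad row $i$ with row $i-1$; the pairings are disjoint, because if rows $i$ and $i+1$ were both bad, the cells of row $i+1$ at columns $a_i,a_i+2,\ldots,b_i$ would simultaneously be forced white by Lemma~\ref{lm:noblackneighbors} (since the corresponding cells of row $i$ are black) and black by the alternating pattern of a bad row, a contradiction. Each paired (bad, above) couple then contributes $\geq 0$ to the total and each unpaired row contributes $\geq 0$, so summing yields $\#W-\#B \geq 0$ over the whole skew diagram, establishing the balance condition. The main obstacle beyond the non-skew case is the possible right extension of row $i-1$, which cannot occur for a regular Young diagram; here it must be controlled by invoking the outer-corner hypothesis on the rightmost cell of the extension, and it is precisely this point where the symmetric min/max hypothesis of the min-max-removal variant is essential.
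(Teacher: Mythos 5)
Your proof is correct and follows essentially the same row-by-row counting strategy that the paper invokes (its own proof is just ``identical to the proof of Proposition~\ref{pr:youngbalanced}''), with the skew-specific details properly filled in: you use the whiteness of minimal cells to pin down the left end of the row adjacent to a deficient row, which is exactly the point where the min-removal hypothesis enters. The only cosmetic difference is that you pair each deficient row with the row \emph{above} it, whereas the paper's argument for straight diagrams pairs with the row \emph{below}; both work, and your handling of the right extension of the paired row and of the disjointness of the pairings is sound.
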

\begin{proof}
Identical to the proof of Proposition~\ref{pr:youngbalanced}.
\end{proof}

Now, let us throw the whole poset overboard for a while and consider
a couple of element-removal games with a different ground structure.

Given a tree (in the graph-theoretical sense) whose vertices are
colored black or white, the \emph{leaf-removal game} is an element-removal
game on the vertices of the tree,
where the leaves are the removable elements. By a
\emph{chess coloring} we mean a black-white vertex coloring where adjacent
vertices have different colors.
\begin{prop}
The leaf-removal game on any chess-colored tree is balanced.
\end{prop}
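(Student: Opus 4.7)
The plan is to verify the two defining properties of a balanced element-removal game by induction on the number of vertices of the tree.

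For the first property (all options are balanced), note that deleting a leaf from a tree yields a smaller tree, and the chess-coloring is clearly inherited by any induced subgraph. Hence every option of the leaf-removal game is again the leaf-removal game on a chess-colored tree with strictly fewer vertices, and so is balanced by induction.

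For the second property, suppose that every leaf is of the same color, say white; we must show that at least half of the vertices are white. The cases with zero or one vertex are immediate, so assume the tree has at least two vertices. Since leaves are white, there is at least one white vertex; root the tree at any white leaf. Every black vertex is then an internal vertex (it is not a leaf, by assumption), so it has at least one child in the rooted tree, and that child is white because the coloring is a chess-coloring. Assigning to each black vertex one of its children gives a map from the black vertices into the white vertices. This map is injective, since distinct vertices in a rooted tree have disjoint sets of children. Hence the number of white vertices is at least the number of black vertices, as required.

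The only mildly delicate point is the choice of pairing: trying to pair each black vertex with its parent fails when the root is black, which is why we root at a white leaf and pair downwards to a child instead. Everything else is routine given the inductive framework already used in Propositions~\ref{pr:balancedtrees} and~\ref{pr:youngbalanced}.
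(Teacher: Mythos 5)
Your proof is correct and is essentially the paper's argument: the paper also turns the tree into a rooted tree poset and then pairs each black vertex with one of the white vertices covering it (i.e., a white child), exactly as you do. The only cosmetic difference is that you insist on rooting at a white leaf, whereas any root works, since under the hypothesis every black vertex is internal and hence has a (necessarily white) child regardless of where the root is placed.
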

\begin{proof}
We can think of our tree as a chess-colored tree poset
by choosing any root vertex (unique minimal element)
and letting all edges (covering relations) be directed from the root.
Then, the proof of Proposition~\ref{pr:chesstrees} applies.
\end{proof}

Finally, let us consider the \emph{corner-removal game},
which is an element-removal game where the ground set
is an $n\times n$ array of colored cells and where a cell is removable if
it is a \emph{corner}, that is, if it has at
most one neighboring cell in the same row and
at most one neighboring cell in the same column. We introduce the term
\emph{truncated square diagrams} for the cell diagrams obtained by
iteratively removing corners from an $n\times n$ cell array.
Figure~\ref{fig:truncatedsquare} shows an example.
\begin{figure}
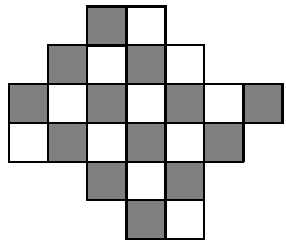

\begin{ytableau}
\none & \none   & *(gray) &         \\
\none & *(gray) &         & *(gray) &      \\
*(gray) &       & *(gray) &         & *(gray) &     & *(gray) \\
        & *(gray) &       & *(gray) &        & *(gray) \\
\none & \none     & *(gray) & & *(gray) \\
\none & \none     & \none & *(gray) & \\
\end{ytableau}
\caption{A chess-colored truncated square diagram with 11
  corners.}\label{fig:truncatedsquare}
\end{figure}

Cells are \emph{neighbors} if they have a common side, and, as always, by a
\emph{chess coloring} we mean a black-white coloring where neighbors have
different colors.

\begin{prop}
  The corner-removal game on any chess-colored truncated square
  diagram is balanced.
\end{prop}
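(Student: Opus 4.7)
My plan is to mirror the proof of Proposition~\ref{pr:youngbalanced}, using a row-by-row counting argument. First I would note that removing any corner from a chess-colored truncated square diagram yields another chess-colored truncated square diagram with one fewer cell, so by Conway induction every option of the game is balanced, and it only remains to verify the second clause of the definition of balanced---namely that if all corners have the same color (say white), then at least half of the cells are white.

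For each row $R$, let $\delta(R)$ denote the number of white cells minus the number of black cells of $R$. Because every row is a contiguous, chess-colored strip, cells alternate in color along the row, so $\delta(R) \in \{-1,0,+1\}$, with $\delta(R)=-1$ precisely when $R$ starts and ends with a black cell. My goal is to produce an injection $\phi$ from the ``deficient'' rows ($\delta=-1$) to the ``excess'' rows ($\delta=+1$), defined by $\phi(R)$ being the row immediately below $R$; this will give $\sum_R \delta(R) \geq 0$, as required.

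To see that $\phi$ is well-defined and actually lands in excess rows, I consider a deficient row $R$. Its two black endpoints cannot be corners (all corners are white), so each has two column-neighbors; hence the row $R+1$ below $R$ exists and contains the entire column range of $R$. I then argue that the leftmost cell of $R+1$ is white: if it sits directly below the black leftmost cell of $R$, chess coloring forces it to be white; otherwise it sits strictly to the left of $R$, so no cell of $R$ lies above it, which means it has at most one column-neighbor, and being leftmost of its row it has at most one row-neighbor---hence it is itself a corner and thus white. The mirror argument handles the rightmost cell of $R+1$. Since $R+1$ is chess-colored with both endpoints white, its length must be odd, giving $\delta(R+1)=+1$. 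Injectivity of $\phi$ is immediate from the formula $\phi(R)=R+1$.

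The one subtle point is verifying that the endpoints of $R+1$ are white in the case where $R+1$ is strictly wider than $R$: the chess-coloring relation with $R$ does not constrain those overhanging cells, so one must invoke the ``all corners are white'' hypothesis at $R+1$'s own endpoints rather than propagating colors from $R$. Once that step is in hand, the parity observation closing out $\delta(R+1)=+1$ is immediate, and summing over all rows finishes the proof.
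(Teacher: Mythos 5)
Your proof is correct and takes essentially the same route as the paper, which simply refers back to the row-by-row argument of Proposition~\ref{pr:youngbalanced} (each row has deficit at most one, and a deficient row is paired with the excess row immediately below it). You have merely filled in the details the paper leaves implicit, including the genuinely relevant point that the endpoints of the row below a deficient row must be white because they are either forced by the chess coloring or are themselves corners.
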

\begin{proof}
Identical to the proof of Proposition~\ref{pr:youngbalanced},
except that there is no need for Lemma~\ref{lm:noblackneighbors}.
\end{proof}

\section{Tree posets}
\label{sec:trees}
In Section~\ref{sec:balanced} we saw that it is easy to compute the
value of the pomax game on a colored tree poset without blocking
triples: Just take the number of white minus the
number of black elements.  In this section we give a complete analysis
of pomax games on tree posets.

Let us begin with a simple example, namely the colored tree poset
in Figure~\ref{fig:chainposet} which is just a chain.
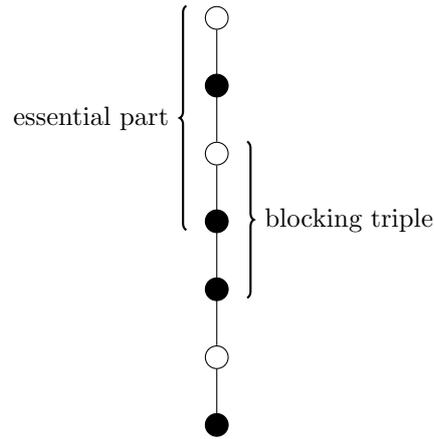
\begin{figure}
\begin{tikzpicture}[every path/.style={>=latex},
  wNode/.style={circle, draw=black, inner sep=1pt, minimum size=3mm},
  bNode/.style={circle, draw=white!0!black, fill=black, text=white, outer sep=-1pt, inner sep=1pt, minimum size=3mm}]
  \small \matrix[row sep={9mm,between origins}, column
  sep={9mm,between origins}]{
    \node[wNode] (p7) {}; \\
    \node[bNode] (p6) {}; \\
    \node[wNode] (p5) {}; \\
    \node[bNode] (p4) {}; \\
    \node[bNode] (p3) {}; \\
    \node[wNode] (p2) {}; \\
    \node[bNode] (p1) {}; \\
  };

  \draw[] (p1) edge (p2);
  \draw[] (p2) edge (p3);
  \draw[] (p3) edge (p4);
  \draw[] (p4) edge (p5);
  \draw[] (p5) edge (p6);
  \draw[] (p6) edge (p7);

  \draw [
  thick,
  decoration={
    brace,
    raise=4mm
  },
  decorate
  ] (p5.north) -- (p3.south)
  node [pos=0.5,anchor=west,xshift=5mm] {blocking triple};

  \draw [
  thick,
  decoration={
    brace,
    mirror,
    raise=4mm
  },
  decorate
  ] (p7.north) -- (p4.south)
  node [pos=0.5,anchor=east,xshift=-5mm] {essential part};

\end{tikzpicture}

\caption{A chain poset with a blocking triple.}\label{fig:chainposet}
\end{figure}
The pomax game on that poset is clearly a zero game: If Black starts
he loses immediately, and if White starts he will lose when the four
topmost elements are removed. Note that the two elements at the bottom
do not affect the value of the game at all. They are ``blocked'' by
the blocking triple above.

Our example suggests the following definition.
\begin{defi}
  For any colored tree poset $P$, its \emph{essential part}, denoted
  by $\ess P$, is the (unique) maximal upper set that does not contain
  any blocking triple.
\end{defi}
We will refer to the elements of the essential part as
\emph{essential} elements.

From now on, we will let $\Po(P)$ denote the pomax game on the colored
poset $P$.

As the following theorem shows, all non-essential elements
might be thrown away without affecting the value
of the game, and since the essential part is balanced its game value
is easy to compute.
\begin{theo}\label{th:ess}
For any colored tree poset $P$, the game equality $\Po(P)=\Po(\ess P)$ holds.
\end{theo}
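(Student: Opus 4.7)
Writing $v(Q) := \#W(Q) - \#B(Q)$ for a colored poset $Q$, the plan is to prove $\Po(P) = v(\ess P)$ by strong induction on $|P|$. This implies the theorem because the essential part $\ess P$ is a disjoint union of colored tree posets without blocking triples, each of which is balanced by Proposition~\ref{pr:balancedtrees} and hence has game value its own $v$ by Proposition~\ref{pr:balancedissimple}; summing over components yields $\Po(\ess P) = v(\ess P)$. The base case $|P|=0$ is immediate.

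For the inductive step, each left option $\Po(P\setminus\{m\})$ with $m$ a white leaf of $P$ equals $v(\ess(P\setminus\{m\}))$ by induction, and similarly for right options with black leaves. A leaf $m$ of $P$ has a one-element principal filter, hence is always essential, so $\ess P \setminus \{m\} \subseteq \ess(P \setminus \{m\})$; writing $\Delta_m$ for the difference one obtains
\[
v(\ess(P \setminus \{m\})) = v(\ess P) - \sgn(m) + v(\Delta_m),
\]
where $\sgn(m) = +1$ if $m$ is white and $-1$ if $m$ is black. To conclude $\Po(P) = v(\ess P)$ via Lemma~\ref{lm:integervalueness}, it then suffices to establish two things: (i) $\sgn(m)\cdot v(\Delta_m) \le 0$ for every leaf $m$, and (ii) when $v(\ess P) \neq 0$, some leaf of the majority colour in $\ess P$ realises $\Delta_m = \emptyset$, so that $v(\ess P)$ is the integer closest to zero in the resulting gap of option values.

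The key step is (i). An element $x$ belongs to $\Delta_m$ exactly when every blocking triple in $x$'s principal filter in $P$ is destroyed upon removing $m$. Since $m$ is a leaf, such triples necessarily have $m$ at the top, and in a tree poset the parent $b$ and grandparent $a$ of $m$ are unique, pinning down the single candidate triple $(a,b,m)$; for this to be blocking with $m$ white (respectively black), both $a$ and $b$ must be black (respectively white). Hence $\Delta_m$ is contained in the chain of ancestors of $a$, and each $x \in \Delta_m$ must additionally satisfy the strong requirement that no other blocking triples arise anywhere in $x$'s subtree—neither new triples formed at $x$ itself nor triples hiding in side branches off the ancestor chain. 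A careful case analysis along this chain, propagating the forced colour of $a$ downward and exploiting the absence of new blocking triples, yields $\#B(\Delta_m) \ge \#W(\Delta_m)$ for white $m$ and symmetrically for black. I expect this combinatorial case analysis to be the main technical obstacle, since $\Delta_m$ can stretch a long way down the ancestor chain and the side-branch conditions interact non-trivially with the local colouring.

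For (ii), when $v(\ess P) > 0$ one exhibits a white leaf $m$ which is not the top of any blocking triple—equivalently, a white leaf whose parent in $P$ is white, or whose grandparent is white or absent. Existence follows from a descent argument in the balanced forest $\ess P$: the strict excess of white over black forces some component to terminate in a white leaf whose two preceding ancestors cannot both be black. The case $v(\ess P) < 0$ is symmetric. Finally, when $v(\ess P) = 0$ the inequality (i) already places every left option at $\le -1$ and every right option at $\ge 1$, so that $0$ is automatically the integer closest to zero in the gap, matching $v(\ess P)$.
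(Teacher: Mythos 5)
Your route is genuinely different from the paper's (which decomposes $\ess P$ into components, invokes balancedness, and gives an explicit second-player strategy for $\Po(P)-\sum m_i$ via Lemma~\ref{lm:noopportunity}), and the skeleton is viable: induct on $|P|$, compute each option as $v(\ess(P\setminus\{m\}))$, and feed the resulting bounds into Lemma~\ref{lm:integervalueness}. Your reduction of $\Delta_m$ to a segment of the ancestor chain of $a$ is also correct. However, the proposal has two genuine gaps. First, step (i) -- the inequality $\sgn(m)\cdot v(\Delta_m)\le 0$, which is the entire technical content of this approach -- is only set up, not proved; you explicitly defer the case analysis. It is in fact true, and the analysis is cleaner than you fear: for $x\in\Delta_m$ the chain from $x$ up to $b$ must be free of blocking triples, which forces its coloring (read downward from $b$) to be a constant black prefix of length at least $2$ followed by an alternating tail, and every such downward prefix starting at $a$ has at least as many black as white elements. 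But as written this step is missing.

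Second, your argument for (ii) contains a false step. You claim $\Delta_m=\emptyset$ is \emph{equivalent} to $m$ not being the top of a blocking triple, and you argue that a positive excess forces the existence of such a white leaf. Neither holds. Take $P=\{a,b,m,u,w\}$ with $a\lessdot b$, $b\lessdot m$, $b\lessdot u$, $u\lessdot w$, where $a,b,u$ are black and $m,w$ are white: here $\ess P=\{m,u,w\}$ and $v(\ess P)=1>0$, yet \emph{both} white maximal elements top a blocking triple ($a\lessdot b\lessdot m$ and $b\lessdot u\lessdot w$). Still $\Delta_m=\emptyset$, because removing $m$ leaves the other triple intact, so $a$ and $b$ remain non-essential. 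The fact you actually need is: if $v(\ess P)>0$ then some white maximal $m$ has $\Delta_m=\emptyset$. This is true, but by a different argument: if $\Delta_m\ne\emptyset$ then $(a,b,m)$ is the \emph{unique} blocking triple above $a$, which forces every element of the filter of $b$ other than $m$ to be black; that filter is then exactly the component of $\ess P$ containing $m$ and has $v\le 0$, so any component with positive excess consists entirely of white maximal elements with $\Delta_m=\emptyset$. You would also need to say a word about the degenerate case where one player has no option at all (handled by balancedness of $\ess P$). With (i) and the corrected (ii) supplied, your proof goes through, but in its current form both load-bearing steps are either absent or wrong.
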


For the proof we need the following lemma.

\begin{lemma}\label{lm:noopportunity}
  Let $P$ be a black-rooted colored tree poset with at least one white
  element but no blocking triple. Let $m$ be the (integer) game value of
  $\Po(P)$.  Then, in the game $\Po(P)-m$, if Black starts White can
  win before Black gets an opportunity to remove the root of $P$.
\end{lemma}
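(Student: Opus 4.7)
The plan is strong induction on $|P|$. The only base case consistent with the hypotheses is $|P|=2$: a cover $r\lessdot x$ with $r$ black and $x$ white gives $m=0$, Black has no legal move in $\Po(P)-m=\Po(P)$, and loses without touching $r$.

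For $|P|\ge 3$, I would examine Black's first move in $\Po(P)-m$. By Proposition~\ref{pr:balancedtrees}, $P$ is balanced with $m=w-b$, so every Black option either sits in the $-m$ summand (possible only when $m>0$) or removes a maximal black element $y$ of $P$; if Black has no legal move at all, he loses at once. In the first case, $m>0$ together with the root being black gives $w\ge 2$ and, by balancedness, a maximal white element $x$ of $P$; White removes $x$, reaching $\Po(P\setminus\{x\})-(m-1)$ with Black to move. Since removing a maximal element of a tree preserves the root and creates no new covering relations (hence no new blocking triples), the reduced poset satisfies the lemma's hypotheses and has pomax value $m-1$, so the inductive hypothesis closes the case.

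In the second case the correct White reply depends on the sign of $m$, and this is the delicate point of the proof. The chain $r\lessdot x\lessdot y$ with colors black, white, black shows the pitfall: if White were to respond by simply removing the only white element, Black could immediately take $r$. The remedy is to use a free White move from $-m$ whenever one is available. Concretely, when $m\le -1$ the summand $-m$ supplies such a move, and White spends it, reaching $\Po(P\setminus\{y\})-(m+1)$ with Black to move; when $m\ge 0$ no free move is available, but $\Po(P\setminus\{y\})=m+1\ge 1$ forces a maximal white element $x$ in $P\setminus\{y\}$, and White removes $x$, reaching $\Po(P\setminus\{y,x\})-m$ with Black to move. A short bookkeeping check using $m=w-b$ and $|P|\ge 3$ confirms that each reduced poset satisfies the lemma's hypotheses (in particular still contains a white element and retains $r$ as its black root) with the correct pomax value, so the inductive hypothesis applies.

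The main obstacle is this dichotomy: White cannot simply play the value inside $P$, but must be willing to spend a free move from the $-m$ summand whenever one exists, precisely in order to keep the root protected.
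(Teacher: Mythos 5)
Your proof is correct, but it takes a genuinely more hands-on route than the paper's. The paper's argument is essentially two sentences: since $P$ has no blocking triples, $\Po(P)-m$ is balanced of value $0$, so by Proposition~\ref{pr:balancedissimple} White wins as second player \emph{no matter how either side plays}; the only thing left to argue is that the root is never exposed, and for that the paper fixes the move order ``exhaust the white free moves of the $-m$ component before touching any white element of $P$'' and observes that Black runs out of moves before White is ever forced to remove the last white element sitting above the root. You instead rebuild the win from scratch by induction on $|P|$, classifying Black's first move and prescribing White's reply in each case; the invariant you maintain --- a position of the form $\Po(P')-m'$ with $m'$ the value of $P'$, where $P'$ is a strictly smaller black-rooted, blocking-triple-free tree still containing a white element, and Black to move --- is exactly what closes the induction, and your bookkeeping checks out ($w\ge2$ whenever $m\ge 0$ and a non-root black element exists; a white maximal element exists by balancedness whenever the relevant value is positive; deleting maximal elements of a tree creates no new covering relations and preserves the root). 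What the paper's approach buys is brevity: strategy-independence of balanced games does all the game-theoretic work, leaving only a counting observation. What yours buys is an argument that does not lean on the strategy-independence statement, plus an explicit identification of the danger (the black--white--black chain) and of the precise rule for when White must spend a move in the $-m$ summand rather than in $P$. The underlying strategic idea --- hoard the white elements of $P$ by preferring the integer component --- is the same in both proofs.
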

\begin{proof}
By Propositions~\ref{pr:balancedtrees} and~\ref{pr:balancedissimple},
White will win $\Po(P)-m$ when Black starts, no matter what strategies they
use. If White removes all white elements in the $-m$ component (if $m$ is
negative) before making any move in the $\Po(P)$ component, she will never
have to remove all white elements in the $\Po(P)$ component, and thus
the root will never be removable for Black.
\end{proof}

\begin{proof}[Proof of Theorem~\ref{th:ess}]
We assume that $\ess P\ne P$; otherwise there is nothing to prove.

The essential part consists of a disjoint
union of trees $\ess P=T_1\cup T_2\cup \dotsb\cup T_k$ and the non-essential
part $P\setminus\ess P$ is a tree. For $i=1,\dotsc,k$, let
$m_i$ be the value of $T_i$
(which is just the number of white minus the number of black elements
since $T_i$ does not contain any blocking triple). We want to show
that the game
$\Po(P)-m_1-m_2-\dotsb-m_k$ is a win for the second player. By symmetry,
it suffices to show that White will win if Black starts.

Note that, by construction of the essential part and by our assumption
that $\ess P\ne P$,
none of the trees $T_1,\dotsc,T_k$ is unicolored.
Thus, by Lemma~\ref{lm:noopportunity}, if Black starts White can win
$\Po(\ess P)-m_1-m_2-\dotsb-m_k$ without ever giving Black an opportunity
to remove a minimal element of $\ess P$. By adopting this strategy
to the game $\Po(P)-m_1-m_2-\dotsb-m_k$, 
White can win without removing any non-essential element.
Black will not get the chance to remove any non-essential element, because
each black maximal element $x$ of $P\setminus\ess P$ is covered by some black
minimal element of $\ess P$ -- otherwise $x$ would have been essential.
\end{proof}

\section{Pomax games of height 2 are NP-hard}
\label{sec:nphard}
Up to this point all our results have been about the \emph{simplicity}
of pomax games: They are integer-valued and their values are easy to compute
in some cases, in particular if the poset is a tree. In this and
the forthcoming section, however, we will show that in general
it is very hard to find the winner of a pomax game, even for very shallow
posets. (All this is under the assumption that $\text{PSPACE}\ne\text{P}$.)

By the \emph{height} of a poset we mean the length of its longest chain.

\begin{theo}\label{th:nphard}
The problem of deciding whether a given pomax game equals zero is NP-hard
even if the height of the colored poset is restricted to two.
\end{theo}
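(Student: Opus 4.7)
The plan is to polynomially reduce \textsc{3-SAT} to the problem of deciding whether $\Po(P)=0$ for a height-$2$ colored poset $P$. Given a 3-CNF formula $\varphi$ with variables $x_1,\dotsc,x_n$ and clauses $C_1,\dotsc,C_m$, I would construct a height-$2$ colored poset $P_\varphi$ of polynomial size whose pomax game value equals $0$ if and only if $\varphi$ is satisfiable. Because height $2$ forces a bipartite structure (roof elements and floor elements), all strategic interaction has to happen through roof elements blocking floor elements, and the reduction has to be engineered entirely within that rigid template.

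First, for each variable $x_i$ I would build a small bipartite \emph{variable gadget} containing two distinguished maximal ``literal'' elements, one corresponding to $x_i$ and one to $\overline{x_i}$, attached to auxiliary elements of the opposite color in such a way that in any optimal line of play one player is forced to remove exactly one of the two literal roof elements early on. This removal commits to a truth value for $x_i$, because the unremoved literal persists as a roof element and continues to block floor elements it covers in the adjoined clause gadgets. The gadget would be balanced so that in isolation either commitment contributes value $0$.

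Second, for each clause $C_j$ I would build a \emph{clause gadget} whose floor contains elements covered by roof elements identified with the three literal elements from the variable gadgets appearing in $C_j$. The colors would be arranged so that if the chosen assignment satisfies $C_j$ then the floor elements can be freed and consumed in time and the gadget contributes $0$, whereas if the assignment fails $C_j$ then one player gains an uncompensated move and the local contribution shifts by $\pm 1$. A large color-balanced antichain buffer appended to the whole construction would ensure that the overall game value is pinned to the sum of the gadget contributions, so that $\Po(P_\varphi)=0$ exactly when every clause contributes $0$, i.e., when $\varphi$ is satisfiable.

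The main obstacle lies in verifying the non-interference properties that make this gadget calculus work. Concretely, one must show that (a) commitment inside a variable gadget is truly irrevocable under optimal play and that a player cannot ``hedge'' by detouring through another gadget; (b) clause gadgets really do contribute exactly $0$ when satisfied, so that a single unsatisfied clause cannot be silently compensated by the slack in a satisfied one; and (c) the buffer interacts cleanly with the gadgets in the sense that floor elements of the buffer never become removable at a strategically relevant moment. Once this case analysis of optimal play is carried out, the integer-valuedness established in Section~\ref{sec:integervalued} guarantees that the total game value is the exact integer sum of the gadget contributions, closing the reduction.
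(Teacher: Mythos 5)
Your proposal is a reduction plan, not yet a proof: the paragraph you label ``the main obstacle'' is precisely where all the mathematical content lives, and it is left undone. The most serious gap is your reliance on the game value being ``the sum of the gadget contributions.'' Game values add over \emph{disjunctive} sums, but your variable gadgets and clause gadgets necessarily share elements (the literal roof elements cover floor elements in several clause gadgets at once), so the poset is connected and does not decompose into a sum of independent components. No appended ``color-balanced antichain buffer'' can restore such additivity. The paper avoids this entirely: it never assigns values to gadgets, but instead exhibits explicit strategies in the whole game and counts moves.

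Concretely, the construction you are missing is much leaner than what you sketch. For each variable one puts two \emph{white} maximal assignment elements ``$x_i=0$'' and ``$x_i=1$''; for each clause a single \emph{black} element covered by exactly the three assignment elements that would falsify it; one black ``candy'' element under each assignment pair to deter White from removing both (cheating uncovers a free Black move); and $n$ isolated black elements so that Black has exactly one reply per variable. Then the whole analysis is a move count: if White starts he is one move behind and always loses (so the game is $\le 0$), and if Black starts, White survives the $n$ exchanges exactly when she can pick one assignment element per variable without ever uncovering a clause element, i.e.\ exactly when the formula is satisfiable (so the game is $\ge 0$ iff satisfiable). Your gadgets gesture at the same ingredients --- commitment via which literal is removed, clause elements that get ``freed'' by a bad assignment --- but until you specify colors and covering relations and replace the value-additivity claim with a direct strategy argument, the reduction is not established.
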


\begin{proof}
Recall that a Boolean formula is on \emph{conjunctive normal form (CNF)}
if it is a conjunction of clauses, where
each clause is a disjunction of literals, each literal being a variable
or the negation of a variable. If every clause has exactly three literals,
it is a \emph{3CNF-formula}. An example is
$(x_1\vee\neg x_2\vee\neg x_4)\wedge(x_2\vee\neg x_3\vee x_4)$.

We will make a reduction from the canonical NP-complete problem 3-SAT.
\begin{center}
\begin{framed}
{\bf 3-Satisfiability (3-SAT)}
\begin{description}
\item[Input]
A 3CNF-formula.
\item[Output] ``Yes'' if and only if the formula is true for some assignments
of the variables.
\end{description}
\end{framed}
\end{center}

Given a 3CNF-formula we will construct a colored poset (in
polynomial time) whose pomax game is zero precisely if the formula is
true.

For each variable $x_i$ in the formula we put two white \emph{assignment
elements} in the poset, one called ``$x_i=0$'' and one called
``$x_i=1$'' (where $0$ and $1$ should be interpreted as ``false'' and
``true'', respectively).  Also, for each clause $C_j$ in the formula
we put a black \emph{clause element} $c_j$ in the poset and we let it be
covered by exactly those assignment elements that would make the
clause false. For instance, the clause element corresponding to
$x_1\vee x_2\vee\neg x_3$ would be covered by the assignment
elements ``$x_1=0$'', ``$x_2=0$'' and ``$x_3=1$''.

We want that the removal of an assignment element ``$x_i=\alpha$''
during play should correspond to actually assigning the value $\alpha$
to the variable $x_i$, so we need some mechanism to prevent White
from cheating by removing both ``$x_i=0$'' and ``$x_i=1$''. This is
accomplished by letting ``$x_i=0$'' and ``$x_i=1$'' cover a black \emph{candy
element} so that White cannot cheat without
uncovering candy for his opponent.  

Finally, we put as many black isolated elements in the poset as there
are Boolean variables, so that Black has something to eat while White
is trying to satisfy the formula.  Figure~\ref{fig:poset_np} shows an
example of our construction.

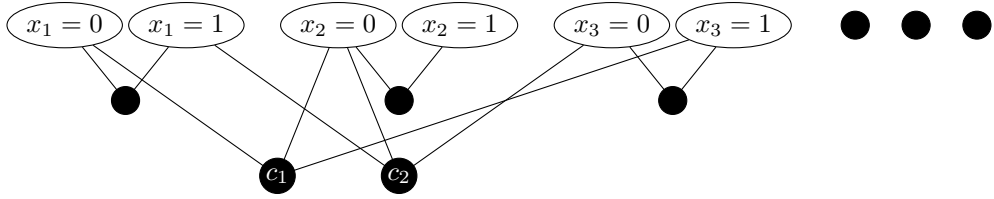
\begin{figure}
  \begin{tikzpicture}[every path/.style={>=latex},
    wEllipseNode/.style={ellipse, draw=black, inner sep=1pt, minimum height=6mm},
    bEllipseNode/.style={ellipse, draw=black, fill=black, text=white, inner sep=1pt, minimum height=6mm},
    wNode/.style={circle, draw=black, inner sep=1pt, minimum size=3.7mm},
    bNode/.style={circle, draw=white!0!black, fill=black, text=white, outer sep=-1pt, inner sep=1pt, minimum size=3.7mm}]
    \small
    \matrix[row sep={10mm,between origins},column sep={4mm,between origins}]{
      \node{}; & \node{}; & \node[wEllipseNode] (x10) {$x_1=0$}; & \node{}; & \node{}; & \node{}; & \node[wEllipseNode] (x11) {$x_1=1$}; & \node{}; & \node{}; &
      \node{}; & \node{}; & \node[wEllipseNode] (x20) {$x_2=0$}; & \node{}; & \node{}; & \node{}; & \node[wEllipseNode] (x21) {$x_2=1$}; & \node{}; & \node{}; &
      \node{}; & \node{}; & \node[wEllipseNode] (x30) {$x_3=0$}; & \node{}; & \node{}; & \node{}; & \node[wEllipseNode] (x31) {$x_3=1$}; & \node{}; & \node{}; &
      \node{}; & \node[bNode]{}; & \node{}; & \node[bNode]{}; & \node{}; & \node[bNode]{}; \\
      & & & & \node[bNode] (d1) {}; & & & & &
      & & & & \node[bNode] (d2) {}; & & & & &
      & & & & \node[bNode] (d3) {}; \\
      & & & & & & & & & \node[bNode] (c1) {$c_1$};
      & & & & \node[bNode] (c2) {$c_2$}; \\
    };
    \draw[] (x10) edge (c1);
    \draw[] (x20) edge (c1);
    \draw[] (x31) edge (c1);
    
    \draw[] (x11) edge (c2);
    \draw[] (x20) edge (c2);
    \draw[] (x30) edge (c2);
    
    \draw[] (x10) edge (d1);
    \draw[] (x11) edge (d1);
    \draw[] (x20) edge (d2);
    \draw[] (x21) edge (d2);
    \draw[] (x30) edge (d3);
    \draw[] (x31) edge (d3);
  \end{tikzpicture}
  \caption{The colored poset
    constructed from the 3CNF-formula
    $(x_1\vee x_2\vee\neg x_3)\wedge(\neg x_1\vee x_2\vee x_3)$.}
  \label{fig:poset_np}
\end{figure}

If White starts he cannot win, because Black has an isolated element
for each pair of white assignment elements, and if White cheats Black gets
candy.

If Black starts, White will win unless some of the black clause elements
are uncovered during the game. Clearly,
White can avoid uncovering a clause element
precisely if the 3CNF-formula is satisfiable.
\end{proof}

\section{Pomax games of height 3 are PSPACE-complete}
\label{sec:pspacecomplete}
Since the number of moves during a pomax game is bounded by the
size of the poset, its outcome can be determined by an algorithm
using only a polynomial amount of space. In this section we show
that pomax games are in fact PSPACE-complete.

\begin{theo}\label{th:pspacecomplete}
The problem of deciding whether a given pomax game equals zero is
PSPACE-complete
even if the height of the colored poset is restricted to three.
\end{theo}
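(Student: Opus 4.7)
Membership in PSPACE is immediate: each move deletes one element from the poset, so the game tree has depth at most $|P|$, and a recursive min-max evaluation of the game value uses only polynomial space. The work lies in establishing PSPACE-hardness.

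For hardness I would reduce from Q3-SAT: decide whether a formula $\exists x_1\,\forall x_2 \cdots Q_n x_n\,\phi$ with $\phi$ in 3-CNF is true under the standard game semantics. The aim is to construct, in polynomial time, a colored poset $P$ of height $3$ with $\Po(P)=0$ iff the Q3-SAT instance is true, extending the NP-hardness construction of Section~\ref{sec:nphard}. The gadgets would be: for each variable $x_i$, two \emph{assignment elements} colored white if $x_i$ is existentially quantified and black if universally quantified, each covering a \emph{candy element} of the opposite color; for each clause $C_j$ a black \emph{clause element} at the bottom, covered by exactly those assignment elements that falsify it; and a \emph{scheduling layer} at the top level forcing the players to process the variables in the prescribed order $x_1,\ldots,x_n$. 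A carefully calibrated supply of isolated tokens of each color would ensure that, along the intended main line of play, both players run out of moves simultaneously, so that any deviation strictly shifts the game value.

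The main obstacle will be designing the scheduling layer within the single additional level available above the assignment elements. My plan is to place, above each variable's assignment elements, small caps whose colors alternate with the quantifier pattern, so that on turn $i$ the player whose turn it is has no maximal element of her color except in the gadget for $x_i$, while the opponent is simultaneously forced to uncover the gadget for $x_{i+1}$. The candy elements and the isolated tokens would be calibrated so that any alternative action -- assigning out of order, eating candy, or uncovering a clause element -- costs the deviator at least one net point. Once the gadget is in place, correctness would follow by induction on the quantifier prefix length: one exhibits a winning-strategy correspondence between optimal play in $\Po(P)$ and optimal play in the Q3-SAT game, with the base case (no quantifiers left) reducing to the clause analysis of Theorem~\ref{th:nphard}. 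Combining the two inequalities $\Po(P)\ge 0$ iff the $\exists$-player can satisfy the formula, and $\Po(P)\le 0$ iff the $\forall$-player cannot prevent satisfaction, yields $\Po(P)=0$ exactly when the QBF is true.
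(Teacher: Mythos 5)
Your overall architecture is right and matches the paper's: membership in PSPACE is as you say, and the hardness reduction is from a quantified Boolean formula, reusing the assignment elements, opposite-colored candy elements, and black clause elements from the NP-hardness construction, with the whole difficulty concentrated in a gadget that handles the quantifier ordering. But that gadget is exactly the part you have not supplied, and the sketch you give for it does not work. You propose ``caps'' \emph{above} the assignment elements arranged so that ``on turn $i$ the player whose turn it is has no maximal element of her color except in the gadget for $x_i$.'' In a pomax game you cannot engineer this: at any point mid-game each player has many maximal elements of her own color available (isolated tokens, candy that has been uncovered, assignment elements of later variables, the caps themselves), and a sum-of-games position gives no handle on ``whose turn it is'' locally. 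There is no way to starve a player of all legal moves outside one gadget without destroying the rest of the construction, and putting a layer above the assignment elements also threatens the height-3 bound once the clause and candy elements below are counted.

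The paper resolves this with a different and more delicate mechanism placed \emph{below} consecutive assignment pairs: for each $i$ it adds six elements $a_i^0,a_i^1,b_i^{00},b_i^{01},b_i^{10},b_i^{11}$ with covering relations $\text{``}x_{i+1}=\beta\text{''}\gtrdot a_i^{\beta}\gtrdot b_i^{\alpha\beta}$ and $\text{``}x_i=\alpha\text{''}\gtrdot b_i^{\alpha\beta}$, colored so that $a_i^\beta$ has the color of the player assigning $x_i$ and $b_i^{\alpha\beta}$ the opposite color. Crucially, this does not \emph{force} the players to assign variables in order; it makes assigning $x_{i+1}$ prematurely \emph{pointless}: if White removes ``$x_{i+1}=\beta$'' while both copies of $x_i$ are still present, Black answers with the free move $a_i^\beta$, which uncovers nothing for White, and the compensating white element $b_i^{\alpha\beta}$ only becomes available after $x_i$ is actually assigned --- so White might as well have waited. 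This tempo argument, rather than a move-restriction argument, is the key idea your proposal is missing, and it is also what gives the clean second half of the proof (if White starts, Black wins by always answering ``$x_{i+1}=\beta$'' with $a_i^\beta$, so the game is always $\le 0$). Without a concrete, verifiable ordering gadget your reduction does not go through.
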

\begin{proof}
We will make a reduction from the following archetypical
PSPACE-complete problem.
\begin{center}
\begin{framed}
{\bf Quantified boolean formula problem (QBF)}
\begin{description}
\item[Input]
A QBF-formula, that is, a formula of the type
\[
\forall x_1\exists x_2\forall x_3\exists x_4\dotsm\forall
x_{n-1}\exists x_n\phi(x_1,\dotsc,x_n),
\]
where $\phi$ is a CNF-formula $C_1\wedge C_2\wedge\dotsm\wedge C_m$.
The number $n$ of variables is even.
\item[Output] ``Yes'' if and only if the QBF-formula is true.
\end{description}
\end{framed}
\end{center}
We will think of QBF as the problem of deciding the winner of a
two-player game where the players, let us call them Black and White,
assign truth values to the variables $x_i$. Black assigns variables
with odd indices and White assigns variables with even indices.
Furthermore, Black must assign $x_1$ first and then White, with
knowledge of the value of $x_1$, must assign $x_2$, and so on. When
all $n$ variables have been assigned, White wins if the CNF-formula
$\phi$ becomes true.

Given a QBF-formula as above we will construct a colored poset (in
polynomial time) whose pomax game is zero precisely if the formula is
true. Let us build this poset step by step, initially focusing on the
main picture and taking care of the details as we go along.

Like in the proof of Theorem~\ref{th:nphard},
for each variable $x_i$ in the formula we put two \emph{assignment
elements} in the poset, one called ``$x_i=0$'' and one called
``$x_i=1$''. But now we color the elements black if $i$ is odd
and white if $i$ is even.

Again following the proof of Theorem~\ref{th:nphard},
for each clause $C_j$ in the formula
we put a black \emph{clause element} $c_j$ in the poset and we let it be
covered by exactly those assignment elements that would make the
clause false.

As before, we need some mechanism to prevent players
from cheating by removing both ``$x_i=0$'' and ``$x_i=1$''. This is
accomplished by letting ``$x_i=0$'' and ``$x_i=1$'' cover some \emph{candy
elements} of the opposite color so that a player cannot cheat without
uncovering lots of candy for his opponent.  From now on we assume that
there is enough candy to make sure that no player will ever cheat.
(Obviously, if cheating uncovers more candy elements than the total
number of non-candy elements in the poset, there will be no cheating.
A more careful analysis shows that it suffices to have $m+1$ white
candy elements for each variable with odd index and one single black
candy element for each variable with even index.)
Figure~\ref{fig:poset_half} shows an example of a colored poset as
constructed so far.

\begin{figure}
\begin{adjustbox}{center}
\begin{tikzpicture}[every path/.style={>=latex},
  wEllipseNode/.style={ellipse, draw=black, inner sep=1pt, minimum height=6mm},
  bEllipseNode/.style={ellipse, draw=black, fill=black, text=white, inner sep=1pt, minimum height=6mm},
  wNode/.style={circle, draw=black, inner sep=1pt, minimum size=3.7mm},
  bNode/.style={circle, draw=white!0!black, fill=black, text=white, outer sep=-1pt, inner sep=1pt, minimum size=3.7mm}]
\small
  \matrix[row sep={10mm,between origins}, column sep={4mm,between origins}]{
    \node{}; & \node{}; & \node[bEllipseNode] (x10) {{$x_1=0$}}; & \node{}; & \node{}; & \node{}; & \node[bEllipseNode] (x11) {{$x_1=1$}}; & \node{}; & \node{}; & \node{}; &
    \node{}; & \node{}; & \node[wEllipseNode] (x20) {{$x_2=0$}}; & \node{}; & \node{}; & \node{}; & \node[wEllipseNode] (x21) {{$x_2=1$}}; & \node{}; & \node{}; & \node{}; &
    \node{}; & \node{}; & \node[bEllipseNode] (x30) {{$x_3=0$}}; & \node{}; & \node{}; & \node{}; & \node[bEllipseNode] (x31) {{$x_3=1$}}; & \node{}; & \node{}; & \node{}; &
    \node{}; & \node{}; & \node[wEllipseNode] (x40) {{$x_4=0$}}; & \node{}; & \node{}; & \node{}; & \node[wEllipseNode] (x41) {{$x_4=1$}}; \\
    & & & \node[wNode] (d11) {}; & \node{$\,\cdots$}; & \node[wNode] (d13) {}; & & & & &
    & & & \node[bNode] (d21) {}; & \node{$\,\cdots$}; & \node[bNode] (d23) {}; & & & & &
    & & & \node[wNode] (d31) {}; & \node{$\,\cdots$}; & \node[wNode] (d33) {}; & & & & &
    & & & \node[bNode] (d41) {}; & \node{$\,\cdots$}; & \node[bNode] (d43) {}; \\
    & & & & & & & & & & & & & & & & & & & \node[bNode] (c1) {$c_1$}; &
    & & & & & & & & & \node[bNode] (c2) {$c_2$}; \\
};

  \draw[] (x10) edge (c1);
  \draw[] (x21) edge (c1);
  \draw[] (x41) edge (c1);

  \draw[] (x20) edge (c2);
  \draw[] (x31) edge (c2);
  \draw[] (x40) edge (c2);

  \draw[] (x10) edge (d11);
  \draw[] (x11) edge (d11);
  \draw[] (x10) edge (d13);
  \draw[] (x11) edge (d13);
  
  \draw [
  thick,
  decoration={
    brace,
    mirror,
    raise=4mm
  },
  decorate
  ] (d11.west) -- (d13.east)
  node [pos=0.5,anchor=north,yshift=-5mm] {candy};

  \draw[] (x20) edge (d21);
  \draw[] (x21) edge (d21);
  \draw[] (x20) edge (d23);
  \draw[] (x21) edge (d23);

  \draw[] (x30) edge (d31);
  \draw[] (x31) edge (d31);
  \draw[] (x30) edge (d33);
  \draw[] (x31) edge (d33);

  \draw[] (x40) edge (d41);
  \draw[] (x41) edge (d41);
  \draw[] (x40) edge (d43);
  \draw[] (x41) edge (d43);

\end{tikzpicture}
\end{adjustbox}

\caption{The assignment, clause and candy elements of the colored poset
  constructed from the QBF instance $\forall x_1\exists x_2\forall
  x_3\exists x_4\,(x_1\vee\neg x_2\vee\neg x_4)\wedge(x_2\vee\neg x_3\vee x_4)$.}
\label{fig:poset_half}
\end{figure}
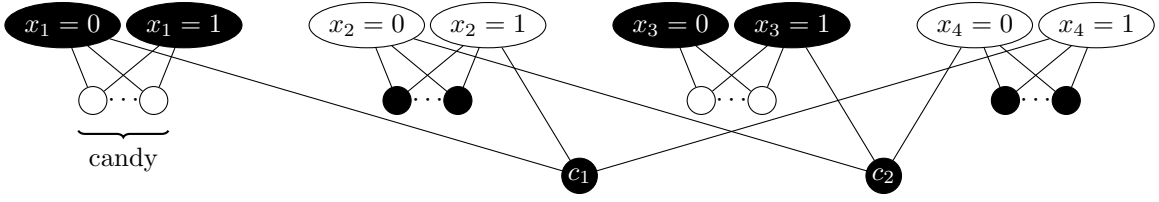

The idea is that Black would start the game and assign a value to
$x_1$ by choosing to remove either ``$x_1=0$'' or ``$x_1=1$''. Then,
White would remove either ``$x_2=0$'' or ``$x_2=1$'' and Black would
remove either ``$x_3=0$'' or ``$x_3=1$'' and so on. Finally, White
would remove either ``$x_n=0$'' or ``$x_n=1$'' and she will win the
game if no clause element $c_j$ has been uncovered, which is the case
exactly if the CNF-formula $\phi$ is true. However, nothing in the
present construction will force the players to make the assignments
in the correct order from left to right.

For each $i\in\{1,\dotsc, n-1\}$, to make sure that the player making
the assignment of the variable $x_{i+1}$ will not have to do that
before the other player has assigned the previous variable $x_i$, we
install a gadget consisting of six new elements called $a_i^0$,
$a_i^1$, $b_i^{00}$, $b_i^{01}$, $b_i^{10}$, $b_i^{11}$, and the
covering relations $\text{``$x_{i+1}=\beta$''}\gtrdot a_i^{\beta}\gtrdot
b_i^{\alpha\beta}$ and $\text{``$x_i=\alpha$''}\gtrdot b_i^{\alpha\beta}$ for
$\alpha,\beta\in\{0,1\}$.  We color $a_i^\beta$ black if $i$ is odd
and white if $i$ is even, and $b_i^{\alpha\beta}$ white if $i$ is odd
and black if $i$ is even.

This completes our construction, and the result is exemplified in
Figure~\ref{fig:poset_complete}.

\begin{figure}
\begin{adjustbox}{center}
\begin{tikzpicture}[every path/.style={>=latex},
  wEllipseNode/.style={ellipse, draw=black, inner sep=1pt, minimum height=6mm},
  bEllipseNode/.style={ellipse, draw=black, fill=black, text=white, inner sep=1pt, minimum height=6mm},
  wNode/.style={circle, draw=black, inner sep=1pt, minimum size=3.7mm},
  bNode/.style={circle, draw=white!0!black, fill=black, text=white, outer sep=-1pt, inner sep=1pt, minimum size=3.7mm}]
\small
  \matrix[row sep={10mm,between origins}, column sep={4mm,between origins}]{
    \node{}; & \node{}; & \node[bEllipseNode] (x10) {{$x_1=0$}}; & \node{}; & \node{}; & \node{}; & \node[bEllipseNode] (x11) {{$x_1=1$}}; & \node{}; & \node{}; & \node{}; &
    \node{}; & \node{}; & \node[wEllipseNode] (x20) {{$x_2=0$}}; & \node{}; & \node{}; & \node{}; & \node[wEllipseNode] (x21) {{$x_2=1$}}; & \node{}; & \node{}; & \node{}; &
    \node{}; & \node{}; & \node[bEllipseNode] (x30) {{$x_3=0$}}; & \node{}; & \node{}; & \node{}; & \node[bEllipseNode] (x31) {{$x_3=1$}}; & \node{}; & \node{}; & \node{}; &
    \node{}; & \node{}; & \node[wEllipseNode] (x40) {{$x_4=0$}}; & \node{}; & \node{}; & \node{}; & \node[wEllipseNode] (x41) {{$x_4=1$}}; \\
    & & & \node[wNode] (d11) {}; & \node{$\,\cdots$}; & \node[wNode] (d13) {}; & & & & &
    & & & \node[bNode] (d21) {}; & \node{$\,\cdots$}; & \node[bNode] (d23) {}; & & & & &
    & & & \node[wNode] (d31) {}; & \node{$\,\cdots$}; & \node[wNode] (d33) {}; & & & & &
    & & & \node[bNode] (d41) {}; & \node{$\,\cdots$}; & \node[bNode] (d43) {}; \\
    & & & \node[bNode] (a10) {$a_1^0$}; & & \node[bNode] (a11) {$a_1^1$}; & & & & &
    & & & \node[wNode] (a20) {$a_2^0$}; & & \node[wNode] (a21) {$a_2^1$}; & & & & \node[bNode] (c1) {$c_1$}; &
    & & & \node[bNode] (a30) {$a_3^0$}; & & \node[bNode] (a31) {$a_3^1$}; & & & & \node[bNode] (c2) {$c_2$}; \\
    \node[wNode] (b100) {$b_1^{00}$}; & & \node[wNode] (b101) {$b_1^{01}$}; & & & & \node[wNode] (b110) {$b_1^{10}$}; & & \node[wNode] (b111) {$b_1^{11}$}; & &
    \node[bNode] (b200) {$b_2^{00}$}; & & \node[bNode] (b201) {$b_2^{01}$}; & & & & \node[bNode] (b210) {$b_2^{10}$}; & & \node[bNode] (b211) {$b_2^{11}$}; & &
    \node[wNode] (b300) {$b_3^{00}$}; & & \node[wNode] (b301) {$b_3^{01}$}; & & & & \node[wNode] (b310) {$b_3^{10}$}; & & \node[wNode] (b311) {$b_3^{11}$}; \\
};

  \draw[] (x10) edge (c1);
  \draw[] (x21) edge (c1);
  \draw[] (x41) edge (c1);

  \draw[] (x20) edge (c2);
  \draw[] (x31) edge (c2);
  \draw[] (x40) edge (c2);

  \draw[] (x10) edge (d11);
  \draw[] (x11) edge (d11);
  \draw[] (x10) edge (d13);
  \draw[] (x11) edge (d13);

  \draw[] (x20) edge (d21);
  \draw[] (x21) edge (d21);
  \draw[] (x20) edge (d23);
  \draw[] (x21) edge (d23);

  \draw[] (x30) edge (d31);
  \draw[] (x31) edge (d31);
  \draw[] (x30) edge (d33);
  \draw[] (x31) edge (d33);

  \draw[] (x40) edge (d41);
  \draw[] (x41) edge (d41);
  \draw[] (x40) edge (d43);
  \draw[] (x41) edge (d43);

  \draw[] (x10) edge (b100);
  \draw[] (x10) edge (b101);
  \draw[] (x11) edge (b110);
  \draw[] (x11) edge (b111);
  \draw[] (a10) edge (b100);
  \draw[] (a10) edge (b110);
  \draw[] (a11) edge (b101);
  \draw[] (a11) edge (b111);
  \draw[] (x20) edge (a10);
  \draw[] (x21) edge (a11);

  \draw[] (x20) edge (b200);
  \draw[] (x20) edge (b201);
  \draw[] (x21) edge (b210);
  \draw[] (x21) edge (b211);
  \draw[] (a20) edge (b200);
  \draw[] (a20) edge (b210);
  \draw[] (a21) edge (b201);
  \draw[] (a21) edge (b211);
  \draw[] (x30) edge (a20);
  \draw[] (x31) edge (a21);

  \draw[] (x30) edge (b300);
  \draw[] (x30) edge (b301);
  \draw[] (x31) edge (b310);
  \draw[] (x31) edge (b311);
  \draw[] (a30) edge (b300);
  \draw[] (a30) edge (b310);
  \draw[] (a31) edge (b301);
  \draw[] (a31) edge (b311);
  \draw[] (x40) edge (a30);
  \draw[] (x41) edge (a31);

\end{tikzpicture}
\end{adjustbox}
\caption{The colored poset constructed from the QBF instance $\forall
  x_1\exists x_2\forall x_3\exists x_4\,(x_1\vee\neg x_2\vee\neg
  x_4)\wedge(x_2\vee\neg x_3\vee x_4)$.}
\label{fig:poset_complete}
\end{figure}
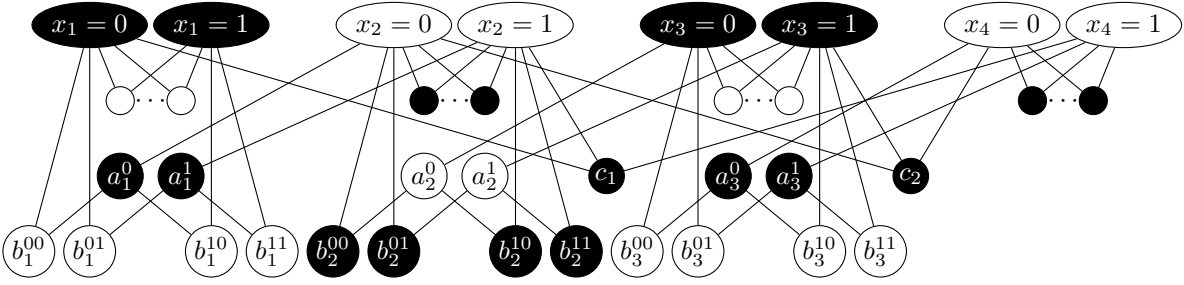

Note that, since no player cheats and uncovers candy for the opponent, for
each $i$, any time during play at most one of the elements $a_i^0$
and $a_i^1$ is maximal and at most one of the elements
$b_i^{00}$, $b_i^{01}$, $b_i^{10}$ and $b_i^{11}$ is maximal.

A player, let us say White, does not gain anything from removing
``$x_{i+1}=\beta$'' while the previous pair of assignment elements
``$x_i=0$'' and ``$x_i=1$'' are both still present, because the other
player, Black, could answer immediately by removing the element
$a_i^\beta$ without uncovering any white element.  Not until later
when Black removes ``$x_i=\alpha$'' for some $\alpha\in\{0,1\}$, White
is compensated by the uncovering of the white element
$b_i^{\alpha\beta}$, so White could as well have waited for this to
happen before she removed ``$x_{i+1}=\beta$''.

We conclude that, if Black starts the game, White will win,
and hence the game is $\ge0$, if and only
if the QBF-formula is true. If White starts the game, Black will win by
simply removing $a_i^\beta$ whenever White removes ``$x_{i+1}=\beta$'', so
the game is always $\le0$.
\end{proof}

\section{Future research and open questions}
\label{sec:future}
Theorems~\ref{th:nphard} and~\ref{th:pspacecomplete} leave us with an
obvious open question.
\begin{open}
Is it a PSPACE-complete problem to compute the outcome of a given pomax game
even if the height of the colored poset is restricted to two?
\end{open}
Colored posets like the one in Figure~\ref{fig:poset_half} seem
very hard to analyze, and though the players may cheat by assigning
the variables in the wrong order, we would guess that games of this type
are PSPACE-complete.
There is also a theorem by
Schaefer~\cite[Th.~3.8]{schaefer78} that points in this direction.

The posets constructed in the proofs of Theorems~\ref{th:nphard}
and~\ref{th:pspacecomplete} have small height but they might be quite
high-dimensional. One could ask if it is possible
to trade low height for low dimensionality while still maintaining the
hardness of the problem.
\begin{open}
How computationally hard is the problem of computing the outcome of a pomax
game on a colored Young diagram poset?
\end{open}

In Section~\ref{sec:othergames} we defined some particular element-removal
games that are not pomax games, and we saw that they behave well
if their underlying structure (poset, tree graph or cell diagram)
is chess-colored. In particular min-max-removal games on forest posets
and leaf-removal games might be possible to analyze for any coloring
by essentially the same method we used for pomax games on tree posets in
Section~\ref{sec:trees}.

\begin{open}
Find a formula for the value of the min-max-removal game on any colored
forest poset.
\end{open}

\begin{open}
Find a formula for the value of the leaf-removal game on any colored
tree.
\end{open}

As mentioned in the introduction, pomax games are a partizan variant
of poset games.  But there is a more straightforward way to
make a poset game partizan and that is simply to color the elements
and let the player at turn choose any element of his own color and
remove it along with all greater elements (even if some of those happen to be of
the opposite color). The games
so obtained, let us call them \emph{partizan poset games}, seem to be
related to Hackenbush restrained. For instance, it is easy to see that
they equal numbers (by essentially the same argument as for
Hackenbush restrained, see~\cite[p.~87]{conway}), and every restrained
Hackenbush tree is obviously equivalent to a partizan poset game on a
colored tree poset. This latter observation shows that partizan poset games
are not
integers but can take the value of any dyadic rational number.
However, the similarity with restrained Hackenbush apparently disappears
for more complex posets (or more complex Hackenbush graphs).

We think that a more thorough study of partizan poset games would be worthwhile.

\bibliographystyle{amsplain}
\bibliography{pomax}

\end{document}